 \newtheorem{theorem}{Theorem}[section]
 \newtheorem{corollary}[theorem]{Corollary}
 \newtheorem{lemma}[theorem]{Lemma}
 \newtheorem{proposition}[theorem]{Proposition}
\theoremstyle{definition}
\theoremstyle{remark}
\newtheorem{fact*}{Fact}
\DeclareMathOperator\im{\mathrm {Im~}}
\newcommand{\conv}{\mathrm{ Conv}}
\renewcommand\h{\mathcal{H}}
\renewcommand{\M}{\mathcal{M}}
\renewcommand{\D}{\mathbb{D}}
\newcommand{\C}{\mathbb{C}}
\newcommand{\BH}{\mathcal{B}(\mathcal{H})}
\renewcommand{\R}{\mathbb{R}}
\newcommand{\cc}[1]{\overline{#1}}
\newcommand{\abs}[1]{\left\vert#1\right\vert}
\newcommand{\norm}[1]{\left\Vert#1\right\Vert}
\newcommand{\ip}[2]{\left\langle #1, #2 \right\rangle}
\newcommand{\ad}{^\ast}
\newcommand{\inv}{^{-1}}
\newcommand{\til}{\raise.17ex\hbox{$\scriptstyle\mathtt{\sim}$}}
\newcommand{\ph}{\varphi}
\newcommand\ep{\varepsilon}
\newcommand\la{\lambda}
\newcommand\beq{\begin{equation}}
\newcommand\eeq{\end{equation}}
\newcommand{\bbm}{\left[ \begin{smallmatrix}}
\newcommand{\ebm}{\end{smallmatrix} \right]}
\newcommand{\bpm}{\left( \begin{smallmatrix}}
\newcommand{\epm}{\end{smallmatrix} \right)}
\numberwithin{equation}{section}
\newlength{\Mheight}
\newlength{\cwidth}
\newcommand{\dfn}[1]{{\bf #1}\index{#1}}
\newcommand{\BK}{\mathcal{B}(\mathcal{K})}
\newcommand{\KK}{\mathcal{K}}
\newcommand{\GG}{\textarc{d}}
\newcommand{\sov}{\textarc{s}}
\newcommand{\funct}{F}
\title[Automatic nc real analyticity]{The royal road to automatic noncommutative real analyticity, monotonicity, and convexity}
\author[J. E. Pascoe]{
J. E. Pascoe$^\symking$
}
\address{Department of Mathematics\\
1400 Stadium Rd\\
  University of Florida\\
 Gainesville, FL 32611}
\email[J. E. Pascoe]{pascoej@ufl.edu}
\thanks{$\symking$ The authors were generously supported by the Fields Institute, Focus Program on Applications of Noncommutative Functions}
\author[R. Tully-Doyle]{
Ryan Tully-Doyle$^\symking$
}
\address{Department of Mathematics and Physics \\
University of New Haven\\
West Haven, CT 06516 }
\email[R. Tully-Doyle]{rtullydoyle@newhaven.edu}
\date{\today}
\subjclass[2010]{46L52, 32A70, 30H10}
\begin{document}

\begin{abstract}
It was shown classically that matrix
monotone and matrix convex functions must be real analytic by L\"owner and Kraus respectively.
Recently, various analogues have been found in several noncommuting variables.
We develop a general framework for lifting automatic analyticity theorems in matrix analysis
from one variable to several variables, the so-called ``royal road theorem."
That is, we establish the principle that the hard part of proving any automatic analyticity theorem
lies in proving the one variable theorem.
 We use our main result to prove the noncommutative L\"owner and Kraus theorems over operator systems as examples, including an analogue of the ``butterfly realization" of Helton-McCullough-Vinnikov for general analytic functions.
\end{abstract}

\maketitle

\tableofcontents

\section{Introduction}\label{sec:intro}

There is no royal road to L\"owner's theorem in one variable. However, there is a royal road to the multi-variable L\"owner theorem in noncommutative function theory: the one variable L\"owner theorem itself. (Barry Simon counts 11, or perhaps 12, proofs of the one variable theorem, none of which are regarded as trivial \cite{simonlow}. Thorough treatments are given in \cite{bha97, don74}.)  The purpose of the present quest is to give a general regime for turning one variable theorems in the intersection of classical complex analysis and operator theory into theorems in multiple noncommuting variables using a so-called ``royal road theorem'' built on the absolute and supreme powers of several complex variables and convexity.  We use this ``royal road'' to prove the analogues of the celebrated theorems of L\"owner \cite{lo34} and Kraus \cite{kraus36} in the multivariable setting as mere examples of a very general analytic technique. (The multivariable L\"owner theorem has been established in many settings. In commuting variables, see \cite{amyloew, pascoelownote}. In noncommuting variables, see \cite{palfia, pastdfree}, culminating in essentially the most general framework in \cite{pascoeopsys}, which we reprove here using the ``royal road'' as a shortcut. Convexity theorems are somewhat less generally developed \cite{dhmconvex, hhlm2008, helniesem, helmconvex04, hptdvconvex, Helton2012, palfia}.)

Matthew Kennedy gave a talk at the Fields Institute on Monday, June 10, 2019, on recent work with Kenneth Davidson on noncommutative Choquet theory \cite{kennyd}. Prominent in the theory was the role of the matrix convex function. The merit of matrix convex functions was appreciated essentially on the level of classically convex functions. However, as there is a great gulf between positive and completely positive maps, so too should there be between convex and matrix convex functions, as was first discovered by Kraus \cite{kraus36}. In light of the recent progress with respect to the related topic of matrix monotonicity, it seemed clear here that automatic analyticity should hold, and for reasons arising more from complex analysis and the one variable theorem than an artisanal approach starting from scratch. This provided additional motivation for the current endeavor. 

\subsection{The classical theorems}

Let $f:(a,b) \to \R$ be a function. We say that $f$ is \dfn{matrix monotone} if 
\[
A \leq B \Rightarrow f(A) \leq f(B)
\]
for all $A, B$ self-adjoint of the same size with spectrum in $(a,b)$, where $A \leq B$ means that $B - A$ is positive semidefinite. (The function $f$ is evaluated via the matrix functional calculus.)  This evidently innocuous condition is in fact very rigid, as is codified in L\"owner's theorem.

\begin{theorem}[L\"owner 1934]\label{low}
Let $f:(a,b) \to \R$. $f$ is matrix monotone if and only if $f$ is real analytic on $(a,b)$ and analytically continues to the upper half plane in $\C$ as a map into the closed upper half plane.
\end{theorem}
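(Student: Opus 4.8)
The equivalence has a soft direction and a substantive one. For the soft direction (a Pick function restricted to $(a,b)$ is matrix monotone) I would invoke the Nevanlinna representation
\[
f(z) = \alpha + \beta z + \int_{\R}\Bigl(\frac{1}{t - z} - \frac{t}{1+t^2}\Bigr)\, d\mu(t), \qquad \beta \ge 0,\ \mu \ge 0,
\]
valid for every holomorphic $f \colon \Ha \to \overline{\Ha}$; since $f$ is real on $(a,b)$, Schwarz reflection extends $f$ across $(a,b)$ and forces $\operatorname{supp}\mu \subseteq \R\setminus(a,b)$. It then suffices to note that $z \mapsto z$ is trivially matrix monotone, that $z \mapsto (t - z)\inv$ is matrix monotone on $(a,b)$ for each fixed $t \notin (a,b)$ by the operator antitonicity of inversion applied to $t - A$ (if $t > b$) or to $A - t$ (if $t < a$), and that matrix monotonicity survives nonnegative linear combinations and pointwise limits, which covers the integral.

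For the substantive direction (matrix monotone $\Rightarrow$ real analytic Pick) I would argue in three stages. \emph{Stage 1: regularity.} One needs $f \in C^1$ even to form divided-difference matrices; this follows either by mollifying, since $f_\eps := f * \phi_\eps$ with $\phi_\eps \ge 0$ is again matrix monotone on $(a+\eps, b-\eps)$ because $f_\eps(A) = \int f(A - sI)\phi_\eps(s)\, ds$ and $A \le B \Rightarrow A - sI \le B - sI$, and then reducing to the smooth case; or directly via divided-difference estimates, by which matrix monotonicity of sufficiently high order forces any prescribed number of continuous derivatives, so that monotonicity of all orders yields $f \in C^\infty$. \emph{Stage 2: the Loewner kernel is positive semidefinite.} For $f \in C^1$ and self-adjoint $A$ with eigenvalues $x_1, \dots, x_n \in (a,b)$, the Daleckii--Krein formula gives, in an eigenbasis of $A$,
\[
Df(A)[H] = L_A^f \circ H, \qquad (L_A^f)_{ij} = \frac{f(x_i) - f(x_j)}{x_i - x_j} \ \ (i \ne j), \quad (L_A^f)_{ii} = f'(x_i),
\]
where $\circ$ is the Schur product. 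Matrix monotonicity gives $f(A) \le f(A + tH)$ for $H \ge 0$ and $t > 0$, hence $Df(A)[H] = \lim_{t \to 0^+} t\inv\bigl(f(A + tH) - f(A)\bigr) \ge 0$; taking $H$ the rank-one all-ones matrix yields $L_A^f = L_A^f \circ H \ge 0$. Equivalently, $(x,y) \mapsto \frac{f(x) - f(y)}{x - y}$ is a positive semidefinite kernel on $(a,b)$.

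\emph{Stage 3: from kernel positivity to the analytic continuation.} Pass to the reproducing kernel Hilbert space $\h$ of this kernel, with kernel vectors $v_x$ satisfying $\frac{f(x) - f(y)}{x - y} = \ip{v_x}{v_y}$. Using the resolvent identity as a template, a lurking-isometry argument produces a densely defined \emph{symmetric} operator $A_0$ on $\h$ and a vector $b$ with $v_x = (A_0 - x)\inv b$; since $A_0$ intertwines the natural conjugation on $\h$ it has equal deficiency indices, hence a self-adjoint extension $A$, and one obtains
\[
f(z) = f(x_0) + \beta\,(z - x_0) + \ip{\bigl((A - z)\inv - (A - x_0)\inv\bigr) b}{b}, \qquad \beta \ge 0,
\]
first for $z \in (a,b)$ and then, by the spectral theorem, for all $z \in \C\setminus\R$ with $\IM f(z) \ge 0$ whenever $\IM z > 0$. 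In particular $f$ is real analytic on $(a,b)$ and continues to $\overline{\Ha}$ as a map into $\overline{\Ha}$. An equivalent route is to specialize $A$ to have all eigenvalues confluent at $x_0$, so that $L_A^f$ becomes the Hankel matrix $\bigl[f^{(i+j-1)}(x_0)/(i+j-1)!\bigr]_{i,j \ge 1} \ge 0$, and invoke the solution of the Hamburger moment problem to manufacture the representing measure and the continuation.

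The main obstacle is Stage 3: the real one-variable content of L\"owner's theorem is exactly the conversion of ``all Loewner matrices are positive semidefinite'' into the existence of a self-adjoint model operator — equivalently, a representing measure — and this is where one must grapple with the possible unboundedness of $A$ and with convergence in the moment problem. Stage 1 (the regularity bootstrap, and in the mollification approach the normal-families passage to the limit in the Pick class) is a secondary technical matter, while Stage 2 is a one-line consequence of Daleckii--Krein. This stratification is precisely what makes Theorem~\ref{low} the genuinely hard input for the ``royal road'' developed in the remainder of the paper.
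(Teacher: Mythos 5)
The paper does not prove Theorem~\ref{low}. It is stated as L\"owner's classical 1934 theorem and cited (\cite{lo34}, with \cite{simonlow,bha97,don74} flagged for its many known proofs), and the entire thesis of the ``royal road'' is that this one-variable theorem is the irreducible hard input to be taken as a black box; the machinery of Sections~2--5 then lifts it to several noncommuting variables. So there is no internal proof to compare your attempt against, and the only thing to do is assess the sketch on its own terms as a reconstruction of a classical argument.

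As such a reconstruction, the outline is sound and correctly stratified. The soft direction via Nevanlinna is standard: Schwarz reflection forces $\operatorname{supp}\mu \subseteq \R\setminus(a,b)$, each resolvent $(t-\cdot)^{-1}$ is matrix monotone on $(a,b)$ by operator antitonicity of inversion, and monotonicity survives nonnegative combinations and pointwise limits. Your three stages for the converse --- mollification (or L\"owner's divided-difference bootstrap) for regularity, Daleckii--Krein plus the rank-one Schur multiplier for positivity of the Loewner kernel, and an RKHS/lurking-symmetric-operator or Hamburger-moment argument for the continuation --- track the Bendat--Sherman and Kor\'anyi/Rosenblum--Rovnyak lines of proof faithfully. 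You are also honest about where the weight sits: Stage~3 still requires checking that $v_x \mapsto x v_x + b$ is well defined on the span of kernel vectors, that the self-adjoint extension $A$ has spectrum avoiding $(a,b)$ so that the resolvent expression is meaningful on the real interval, and (in the mollification route) a normal-families passage to the limit; in the moment-problem route one must handle possible indeterminacy and the unbounded case. None of that is filled in here, but you have identified the right skeleton and located precisely the one-variable difficulty that the paper is structured to avoid re-deriving.
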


For example, the functions $x$, $\log x$, $\sqrt{x}$, $\tan x$, and  $-x^{-1}$ are all matrix monotone on intervals in their domains, but $e^x$, $x^3$, and $\sec x$ are not. Note that matrix monotonicity is a geometric property; matrix monotonicity on a single interval implies matrix monotonicity on any interval where the function is real-valued in the real domain for analytic functions. L\"owner's theorem arises in many contexts, including mathematical physics \cite{wigner, wigner2}. Other applications are found, for example, in quantum data processing \cite{ahls}, wireless communications \cite{Jorswieck2007f, Boche2004e} and engineering \cite{alcober, anderson, osaka}.

Nevanlinna\cite{nev22, lax02} showed that all such functions on the unit interval are of the form
\[
f(x) = a + \int_{[-1,1]} \frac{x}{1 + tx} \, d\mu(t)
\]
for $a \in \R$ and $\mu$ a finite measure supported on $[-1,1]$. The Nevanlinna representation tells us exactly how to analytically continue a function to the upper half plane. 

Let $f:(a,b) \to \R$ be a function. We say that $f$ is \dfn{matrix convex} if 
\[ 
f\left(\frac{A + B}{2}\right) \leq \frac{f(A) +f(B)}{2}
\]
for all $A, B$ self-adjoint with spectrum in $(a,b)$. L\"owner's student Kraus proved the following theorem, which is ostensibly more technical, but demonstrates the same essential rigidity.

\begin{theorem}[Kraus 1937]\label{kraus}
Let $f:(-1,1) \to \R$. $f$ is matrix convex if and only if 
\[
f(x) = a + bx + \int_{[-1,1]} \frac{x^2}{1 + tx} \, d\mu(t)
\]
where $a, b \in \R$ and $\mu$ is a finite measure supported on $[-1,1]$. Note that all such functions analytically continue to the upper half plane.
\end{theorem}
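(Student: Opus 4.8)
The plan is to verify the ``if'' direction by hand and to obtain the ``only if'' direction from the one-variable L\"owner theorem (Theorem~\ref{low}) through the classical equivalence between matrix convexity and matrix monotonicity: the ``royal road'' strategy run one dimension down, with no complex analysis beyond what is already packaged inside L\"owner's theorem.

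For sufficiency it is enough to check that each integrand $\varphi_t(x)=x^{2}/(1+tx)$ is matrix convex on $(-1,1)$ for every $t\in[-1,1]$, since matrix convexity is preserved under nonnegative scalar multiples, sums and pointwise limits --- hence under integration against a finite positive measure --- and is unaffected by adding an affine function. For $t=0$ this is $x\mapsto x^{2}$, which is matrix convex; for $t\ne 0$ the identity
\[
\frac{x^{2}}{1+tx}=\frac{x}{t}-\frac{1}{t^{2}}+\frac{1}{t^{2}}\cdot\frac{1}{1+tx}
\]
exhibits $\varphi_t$ as an affine function plus a positive multiple of $x\mapsto 1/(1+tx)$, and the latter is matrix convex because it is the composition of the matrix convex function $s\mapsto 1/s$ on $(0,\infty)$ with the affine map $x\mapsto 1+tx$, which carries $(-1,1)$ into $(0,\infty)$ as $1+tx\ge 1-|x|>0$. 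The asserted analytic continuation is then immediate: each $\varphi_t$ is rational with a single pole at $-1/t\in\R\setminus(-1,1)$, hence holomorphic on $\C\setminus\R$, and the integral of a locally uniformly bounded family of such functions is holomorphic on $\C\setminus\R$.

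For necessity, let $f$ be matrix convex on $(-1,1)$; subtracting the constant $f(0)$ we may assume $f(0)=0$. Matrix convex functions on an open interval are $C^{1}$ (classical, and not needing the theorem being proved), so $f'(0)$ exists and we may set $g(x)=f(x)/x$ for $x\ne 0$ and $g(0)=f'(0)$. The crux is the \textit{bridge lemma}: if $f$ is matrix convex on $(-1,1)$ with $f(0)=0$, then $g$ is matrix monotone on $(-1,1)$. Granting it, Theorem~\ref{low} together with the Nevanlinna representation recalled just after it gives $g(x)=b+\int_{[-1,1]}\frac{x}{1+tx}\,d\mu(t)$ for some $b\in\R$ and finite positive measure $\mu$; multiplying through by $x$ and restoring the constant yields $f(x)=f(0)+bx+\int_{[-1,1]}\frac{x^{2}}{1+tx}\,d\mu(t)$ with $b=f'(0)$, which is precisely the claimed representation.

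The main obstacle is the bridge lemma. I would prove it via the matrix-theoretic characterizations underlying L\"owner's theory: differentiating $t\mapsto f(A+tH)$ twice identifies matrix convexity of $f$ with positive semidefiniteness, for all finite $\{x_{1},\dots,x_{n}\}\subset(-1,1)$, of the matrix of second divided differences rooted at $0$, namely $\big[\,f[0,x_{i},x_{j}]\,\big]_{i,j}$; since $f(0)=0$ one has $f[0,x,y]=g[x,y]$, the first divided difference of $g$, so this is exactly the statement that every L\"owner matrix of $g$ is positive semidefinite, i.e.\ that $g$ is matrix monotone. (Alternatively, on $(0,1)$ one can argue directly: if $0<A\le B$ have spectra in $(0,1)$ then $D=B^{-1/2}A^{1/2}$ is a contraction with $A=D^{\ast}BD$, the Jensen operator inequality gives $f(A)\le D^{\ast}f(B)D$, and conjugating by $A^{-1/2}$ collapses this to $g(A)\le g(B)$; the negative side follows by applying this to $\hat f(x)=f(-x)$, and the two matrix monotone pieces are matched across $0$.) Either route reduces everything to a differentiation identity or an operator inequality together with the (classical) regularity of matrix convex functions; I expect this bookkeeping, rather than anything complex-analytic, to be where the effort goes, since L\"owner's one-variable theorem has already absorbed the hard part.
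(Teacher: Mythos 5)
The paper does not actually prove Theorem~\ref{kraus}: it is stated in Section~1.1 as a classical citation to \cite{kraus36}, and the whole ``royal road'' framework explicitly takes the one-variable L\"owner and Kraus theorems as black-box inputs (the ``one-variable knowledge'' axiom of a sovereign class). There is thus no in-paper argument to compare yours against. Judged on its own terms, your reduction of Kraus to L\"owner through the bridge lemma (if $f$ is matrix convex with $f(0)=0$ then $g(x)=f(x)/x$ is matrix monotone) is the standard classical route, and your sufficiency argument via $x^{2}/(1+tx)=x/t-1/t^{2}+t^{-2}(1+tx)^{-1}$ and closure properties of matrix convexity is correct.

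Two caveats on the bridge lemma. The divided-difference route rests on the characterization that $f$ is matrix convex if and only if every matrix $\big[f[x_{0},x_{i},x_{j}]\big]_{i,j}$ is positive semidefinite; this is itself a nontrivial theorem (also essentially due to Kraus, and requiring some regularity of $f$ to even state), though it is obtainable by differentiating the convexity inequality and is strictly weaker than the integral representation you are proving, so there is no circularity --- just an implicit dependence worth flagging, alongside the cited $C^{1}$ regularity. The alternative Jensen route is, as written, genuinely incomplete: you establish monotonicity of $g$ on $(0,1)$ and, via $\hat f(x)=f(-x)$, on $(-1,0)$, and then assert the pieces are ``matched across $0$.'' Matrix monotonicity does not patch automatically across a point: for $A\le B$ with spectra straddling $0$, neither half-interval argument applies, and equivalently the off-diagonal L\"owner-matrix entries $g[x,y]$ with $x<0<y$ are untouched. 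The divided-difference route avoids this precisely because $f[0,x,y]=g[x,y]$ holds simultaneously for nodes on both sides of $0$, so that is the version to carry through; with it made precise the proof is sound.
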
 

For example, $x^2$ is matrix convex, but $x^4$ is not. 

\subsection{Free noncommutative function theory}

Let $R$ be a real topological vector space. Define the \dfn{matrix universe over $R$}, denoted by $\M(R)$, by
\[
\M(R) = \bigcup_{n\in \mathbb{N}} M_n(\C) \otimes_\R R,
\]
where $M_n(\C)$ is the space of $n$ by $n$ matrices over $\C$. The space $\M(R)$ is endowed with the disjoint union topology. Given $V \subset \M(R)$, denote by $V_n$ the set $V \bigcap M_n(\C) \otimes R$. Define the \dfn{Hermitian matrix universe over $R$}, denoted by $\mathcal S(R)$, to be 
\[
\mathcal S(R) = \bigcup_{n \in \mathbb{N}} S_n(\C) \otimes_\R R,
\]
where $S_n(\C)$ denotes the space of $n$ by $n$ Hermitian matrices.

A set $G \subset \M(R)$ is defined to be a \dfn{(free) domain} if it satisfies the following axioms:
\begin{enumerate}
\item $X \oplus Y \in G \Leftrightarrow X, Y \in G$
\item $X \in G_n \Rightarrow U\ad X U \in G$ for all $n$ by $n$ unitaries $U$ over $\C$
\item $G_n$ is open for all $n$.
\end{enumerate}

Let $G \subset \M(R_1)$ be a free domain. We say a function $f: G \to \M(R_2)$ is a \dfn{free function} if
\begin{enumerate}
\item $f|_{G_n}$ maps into $\M(R_2)_n$,
\item $f(X \oplus Y) = f(X) \oplus f(Y)$,
\item $S\inv f(X) S = f(S\inv X S)$ for all $n$ by $n$ invertible $S$ over $\C$ such that $X, S\inv X S \in G_n$.
\end{enumerate}

If $R$ is a real operator system -- that is, a real subspace containing $1$ of self-adjoint elements in a $C\ad$-algebra - then for each $n$ there is a natural ordering on $S_n(\C) \otimes R$, since matrices over $R$ are elements of a larger $C\ad$-algebra. (The Choi-Effros Theorem \cite{choieffros} gives that any abstract Archimedean matrix ordering in a very general sense is equivalent to this situation. That is, this is the most general setup.) Given $A, B \in S_n(\C) \otimes R$, we say $A \leq B$ if $B - A$ is positive semidefinite as an element of $S_n(\C) \otimes R$.

Given $R_1$ and $R_2$ real operator systems and a domain $G \subseteq \mathcal S(R_1)$, say that a free function $f: G \to \mathcal S(R_2)$ is \dfn{matrix monotone} if 
\[
A \leq B \Rightarrow f(A) \leq f(B)\] 
whenever $A$ and $B$ have the same size. We say a domain $G\subseteq \mathcal S(R_1)$ is \dfn{convex} if each $G_n$ is convex. For a convex domain $G \subseteq \mathcal S(R_1)$, say that a free function $f: G \to \mathcal S(R_2)$ is \dfn{matrix convex} if
\[
f\left(\frac{A + B}{2}\right) \leq \frac{f(A) + f(B)}{2}
\] 
for all pairs $A, B \in G$ of the same size.

Define the \dfn{upper half plane} $\Pi(R) = \{X \in \M(R) | \im X > 0\}$, where $\im X = (X - X\ad)/2i$, and $A > B$ if the difference is strictly positive definite -- that is, the difference is self-adjoint and its spectrum is a subset of $(0,\infty)$. For a convex domain $G \subseteq \mathcal S(R)$, define the \dfn{tube over $G$} to be the set 
\[
T(G) = \{X + iY | X \in G \text{ and } Y = Y\ad\}.
\]

In several commuting variables, generalizations of L\"owner's theorem appear in \cite{amyloew, pascoelownote}. The proofs are technical and involved, and rely heavily on commutative Hilbert space techniques. The difficulty is a symptom of the fact that the variety of commuting tuples of matrices is full of holes -- that is, it is not convex and, thus, unnatural for understanding monotonicity. By contrast, the machinery of several complex variables is apparently much more natural in the noncommutative setting. Noncommutative analogues of L\"owner's theorem have previously been established in \cite{pastdfree, palfia}. The culmination of this work appears in \cite{pascoeopsys}, where the following theorem was proved in perhaps the highest level of generality that one should expect (although that proof relies on the commuting theorem in \cite{amyloew} and is thus ``unnatural'').

\begin{theorem}[Theorem 1.2, Pascoe \cite{pascoeopsys}]
Let $R_1$ and $R_2$ be closed real operator systems. Let $G \subseteq \mathcal{S}(R_1)$ be a convex free domain.  A function $f: G \to \mathcal{S}(R_2)$ is matrix monotone if and only if $f$ is real analytic on $G$ and analytically continues to $\Pi(R_1)$ as a map into $\cc{\Pi(R_2)}$.
\end{theorem}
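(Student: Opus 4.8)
\medskip
\noindent\textbf{Proof proposal.} The plan is to prove the two implications by entirely different methods, arranging that all the genuinely analytic content is concentrated in the one-variable theorem, Theorem~\ref{low}. Write $\Ha = \{z \in \C : \im z > 0\}$, and recall that since $R_i$ is an operator system, positivity in $S_n(\C)\otimes R_i$ is detected by pairing with vectors $\xi$ in the Hilbert space on which the ambient $C\ad$-algebra acts. \emph{For the easy direction}, suppose $f$ continues holomorphically to $\Pi(R_1)$ with values in $\cc{\Pi(R_2)}$, and let $A \leq B$ in $G_n$. First assume $B - A > 0$: then for $z \in \Ha$ one has $\im(A + z(B - A)) = (\im z)(B - A) > 0$, so $A + z(B-A) \in \Pi(R_1)$, and $z \mapsto f(A + z(B-A))$ is a holomorphic $\mathcal{S}(R_2)$-valued function on $\Ha$ that agrees with the real-analytic curve $t \mapsto f(A + t(B-A))$ on $(0,1)$. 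Pairing with a vector state $\xi$ gives a scalar Pick function real on $(0,1)$; such a function has nonnegative derivative on $(0,1)$ (its imaginary part is $\geq 0$ just above the interval), hence is nondecreasing, so $\ip{f(A)\xi}{\xi} \leq \ip{f(B)\xi}{\xi}$. Since $\xi$ was arbitrary, $f(A) \leq f(B)$. For general $A \leq B$ I would apply this to $A - \eps\cdot 1 \leq B$ — legitimate because $1 \in R_1$, $G_n$ is open, and $B - (A - \eps\cdot 1) = (B-A) + \eps\cdot 1 > 0$ — and let $\eps \to 0$, using continuity of $f$.

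\emph{For the hard direction}, assume $f$ is matrix monotone; I would first reduce to one variable. Fix $X_0 \in G_n$ and a positive $\Delta \in S_n(\C)\otimes R_1$, and let $I = \{t \in \R : X_0 + t\Delta \in G_n\}$, an open interval containing $0$ by convexity. For a vector state $\xi$ put $g_\xi(t) = \ip{f(X_0 + t\Delta)\xi}{\xi}$; I claim $g_\xi : I \to \R$ is a scalar matrix monotone function. Given Hermitian $P \leq Q$ of size $m$ with spectra in $I$, the amplifications $X_0 \otimes I_m + \Delta \otimes P$ and $X_0 \otimes I_m + \Delta \otimes Q$ in $S_{nm}(\C) \otimes R_1$ are, after a unitary conjugation, direct sums (with multiplicity) of the points $X_0 + \lambda\Delta$ with $\lambda$ in $\si(P)$, resp. $\si(Q)$, so by the domain axioms they lie in $G_{nm}$; their difference $\Delta \otimes (Q - P)$ is positive; hence matrix monotonicity together with the direct-sum and similarity properties of $f$ gives $g_\xi(P) \leq g_\xi(Q)$ after pairing with $\xi \otimes (\cdot)$ and applying the spectral theorem. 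By Theorem~\ref{low} each $g_\xi$ is therefore real analytic on $I$ and continues to $\Ha$ as a Pick function. Next I would record that $f$ is locally bounded on $G$: near $X_0$, the order sandwich $X_0 - \eps\cdot 1 \leq Y \leq X_0 + \eps\cdot 1$ (valid for $\eps = \norm{Y - X_0}$ small) together with the matrix monotonicity of $f$ and the continuity of the scalar slices in the direction $1$ confines $f(Y)$ to a norm-bounded order interval of the ambient $C\ad$-algebra. Local boundedness plus the scalar real analyticity of all the $g_\xi$ then promotes, by a standard weak-to-strong argument, to real analyticity of the $\mathcal{S}(R_2)$-valued slice $t \mapsto f(X_0 + t\Delta)$; and since the positive cone spans $S_n(\C)\otimes R_1$, a Hartogs-type argument (again using local boundedness) upgrades analyticity along this spanning family of directions to joint real analyticity of $f$ on $G$.

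It remains to continue $f$ holomorphically to all of $\Pi(R_1)$ with values in $\cc{\Pi(R_2)}$, and this is where I expect the main obstacle to lie. The one-variable theorem hands us the continuation slicewise: Herglotz estimates fed by the local boundedness of $f$ make $\{g_\xi\}_{\norm\xi \leq 1}$ locally uniformly bounded on $\Ha$, so $z \mapsto f(X_0 + z\Delta)$ extends holomorphically to $\Ha$ with $\im f(X_0 + z\Delta) \geq 0$, i.e.\ into $\cc{\Pi(R_2)}$. What remains — and what is genuinely nontrivial — is to patch these complexified positive-direction slices through $G$ into one holomorphic \emph{free} function on the whole of $\Pi(R_1)$: overlapping slices agree by uniqueness of analytic continuation, the free axioms for the extension descend from those of $f$ by continuation, and the range condition survives the limit because $\cc{\Pi(R_2)}$ is closed, but one must show that the iterated complexified-slice extensions actually exhaust $\Pi(R_1)$ while the assembled function remains free — equivalently, that the local Nevanlinna data of the slices assemble into a global ``butterfly-type'' realization. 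This last step is exactly the work done by the ``royal road theorem'' of this paper; granting it, Theorem~1.2 follows by supplying Theorem~\ref{low} as input, so that the entire analytic difficulty of L\"owner's theorem is paid for once and reused.
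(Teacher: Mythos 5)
Your easy direction is fine and essentially standard (the paper does not even bother to write it out). For the hard direction your instinct is right — reduce to one-variable slices, feed them to Theorem~\ref{low}, use order sandwiching to get local boundedness — and in fact the scalar-slice argument is almost exactly how the paper verifies the ``one-variable knowledge'' and ``control'' axioms when showing matrix monotone functions form a sovereign class. But the assembly step contains a genuine gap, and the attribution at the end is off in a way that matters.

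The gap is the phrase ``a Hartogs-type argument (again using local boundedness) upgrades analyticity along this spanning family of directions to joint real analyticity.'' Hartogs' theorem is about separate \emph{holomorphy} in complex directions, which you do not yet have; what you have is real analyticity along a cone of real directions with uniform control on the Taylor coefficients, and passing from that to joint holomorphy on a full complex ball is precisely the content of the paper's quantitative wedge-of-the-edge theorem. That theorem (via Lagrange interpolation and the resulting geometric bounds $\|h_d(Z)\|\le KC^d\|Z\|^d$ on homogeneous pieces bounded on the positive cone) is the engine of both Lemma~\ref{realeachlevel} and Theorem~\ref{realsov}, and it cannot be replaced by a Hartogs argument. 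Without it your ``weak-to-strong'' step is just an aspiration.

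You also conflate two distinct pieces of machinery. The royal road theorem, Theorem~\ref{realsov}, only delivers \emph{real analyticity} of $f$ on $G$; it does not by itself produce the analytic continuation to $\Pi(R_1)$. The continuation comes from the Nevanlinna-type realization of Theorem~\ref{monoreal} (which needs real analyticity as input, so the convergent power series exists), $f(Z)=a_0+Q^*(A-\sum P_iZ_i^{-1})^{-1}Q$, which is manifestly analytic on the full tube. (Incidentally this is not a ``butterfly'' realization — that is the Kraus/convex case; the monotone realization is a resolvent/Cauchy-transform type.) The paper's proof of Theorem~\ref{newlow} then closes the loop neatly: given $Z\in\Pi(R_1)_1$, it chooses positive $H_1,\ldots,H_n$ spanning $R_1$ and a point $(z_1,\ldots,z_n)\in\Pi(\R^n)_1$ with $Z=\sum H_iz_i$; the restriction $f(\sum H_ix_i)$ is a finite-dimensional matrix monotone function, its realization continues to $\Pi(\R^n)_1$, and pulling back exhausts $\Pi(R_1)_1$ as the $H_i$ vary. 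That is rather cleaner than the slice-patching you describe, and it completely sidesteps the question of whether iterated slice extensions ``exhaust'' the codomain.
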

We give a new proof of this result as Theorem \ref{newlow} using the ``royal road''.

We note two important examples of matrix monotone functions. The Schur complement $X_{11} - X_{12}X_{22}\inv X_{21}$ gives a matrix monotone function on the set $D \subset \mathcal{S}(S_2(\C))$, the space of block $2$ by $2$ self-adjoint matrices, where $X_{22}\inv$ is defined \cite{liuwang}. Another example is the matrix geometric mean, originating in mathematical physics \cite{pusz}, given by the formula 
$X_1^{1/2}(X_1^{-1/2}X_2 X_1^{-1/2})^{1/2} X_1^{1/2}$ defined on pairs of positive matrices in $\mathcal{S}(\R^2)$ \cite{lawsonlim, bhatiakar, ando94}.

Analogues of Kraus's theorem are less general. One example is the so-called ``butterfly realization'' developed in \cite{heltonbutterfly} for noncommutative rational functions, which captures the essence of the classical case. 

\begin{theorem}[Theorem 3.3, Helton, McCullough, Vinnikov \cite{heltonbutterfly}]
Let $r: G \subset \mathcal{S}(\R^d) \to \mathcal{S}(\R)$ denote a noncommutative rational function on a domain $G$ containing $0$. If $r$ is matrix convex near $0$, then $r$ has a realization of the form
\[
r(X) = r_0 + L(X) + \Lambda(X)\ad (1 - \Gamma(X))\inv \Lambda(X) 
\]
for a scalar $r_0$, a real linear function $L$, $\Lambda$ affine linear,  and $\Gamma(X) = \sum A_i \otimes X_i$ for self-adjoint matrices $A_i$.
\end{theorem}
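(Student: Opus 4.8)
The plan is to start from a minimal \emph{symmetric} state-space realization of the rational function $r$ and show that matrix convexity forces the signature in the middle of that realization to be positive. Symmetric realization theory for noncommutative rational functions regular at $0$ provides a minimal representation
\[
r(X)=r_0+L(X)+\Lambda(X)\ad\,J\,(1-\Gamma(X))\inv\Lambda(X),\qquad \Gamma(X)=\textstyle\sum_i A_i\otimes X_i,
\]
where $r_0$ is a scalar, $L$ is real linear, $\Lambda$ is affine linear, $J$ is a signature matrix (self-adjoint, $J^2=1$), and $J A_i=A_i\ad J$ for each $i$. With this in hand, the theorem reduces to a single assertion: if $r$ is matrix convex near $0$ then $J$ may be taken to be the identity, since then $A_i=A_i\ad$ and we are done. (One first removes the kernel of $J$ by minimality; the content is that the remaining signature is positive.)

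To force $J=1$, compute the noncommutative Hessian. Writing $R(X)=(1-\Gamma(X))\inv$, so that $R$ has directional derivative $R(X)\Gamma(H)R(X)$, a routine double differentiation shows that the affine part $r_0+L(X)$ drops out and
\[
\tfrac12\,D^2r(X)[H,H]=\psi(X,H)\ad\,J\,(1-\Gamma(X))\,\psi(X,H),\qquad \psi(X,H)=R(X)\big(\Lambda_1(H)+\Gamma(H)R(X)\Lambda(X)\big),
\]
where $\Lambda_1$ is the linear part of $\Lambda$; the relation $JA_i=A_i\ad J$ is exactly what makes the right-hand side self-adjoint. Matrix convexity of $r$ near $0$ is equivalent to $D^2r(X)[H,H]\geq 0$ for all $X$ near $0$ and all self-adjoint $H$, i.e.\ to $\psi(X,H)\ad J(1-\Gamma(X))\psi(X,H)\geq 0$. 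Now the free structure does the amplification: as $X$ and $H$ range and one passes to direct sums $X\oplus X'$ and conjugates by unitaries, the vectors $\psi(X,H)$ sweep out the entire state space — this is where minimality of the realization is used, through a controllability argument, and where one must first propagate matrix convexity from a neighborhood of $0$ to the whole natural domain of the realization. The inequality then forces $J\geq 0$, hence $J=1$ on the minimal part, and substituting back gives the butterfly form.

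The step I expect to be the main obstacle is precisely ``$r$ matrix convex near $0$ $\Rightarrow$ $J\geq 0$.'' It does not suffice to argue infinitesimally at $0$, because the Hessian at $0$ detects only a proper subspace of the state space in general; one must move genuinely around the domain, and this forces (i) a local-to-global rigidity step — a rational function that is matrix convex near $0$ is matrix convex on the entire natural domain of its realization, in the spirit of the Kraus integral representation of Theorem~\ref{kraus} — together with (ii) a Positivstellensatz-type fact that a noncommutative rational function of the form $p(X)\ad\,M\,(1-\Gamma(X))\inv p(X)$, with $p$ affine and $M$ Hermitian, which is $\geq 0$ on that domain must have $M\geq 0$ after the minimal reduction. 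An alternative that offloads some of this onto the one-variable theory is to use that matrix convexity of $r$ is equivalent to a divided-difference function being matrix monotone, apply the noncommutative L\"owner realization to the latter, and then rearrange; but the analytic core — the rigidity of positive noncommutative rational functions under evaluation on large matrices — is unavoidable. The leftover task of matching $r_0$, $L$, and the constant part of $\Lambda$, and checking that the realization reproduces $r$ rather than only its Hessian, is routine.
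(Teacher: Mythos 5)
Your outline reproduces the strategy that Helton--McCullough--Vinnikov actually use: pass to a \emph{minimal} symmetric state-space realization carrying a signature operator $J$ with $JA_i=A_i^*J$, differentiate twice to obtain
$\tfrac12 D^2r(X)[H,H]=\psi(X,H)^*J(1-\Gamma(X))\psi(X,H)$
(your algebra here is correct, and the intertwining $JA_i=A_i^*J$ is exactly what makes that quadratic form self-adjoint), and then argue that convexity forces $J\geq0$ on the minimal state space. But you have not proved the theorem; you have accurately located the crux and then described, without executing, two programs for closing it. That step is not a formality. At $X=0$ the vectors $\psi(0,H)=\Lambda_1(H)+\Gamma(H)\Lambda_0$ occupy only one controllability layer, and the claim that as $X$ ranges (with direct-sum amplifications) the $\psi(X,H)$ densely fill the minimal state space --- together with the claim that the positivity of $\psi^*J(1-\Gamma(X))\psi$ near $0$ propagates far enough to yield $J\geq 0$ rather than $J(1-\Gamma(X))\geq0$ at a few points --- is precisely the nontrivial content of the HMV proof. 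As written, the proposal is an annotated statement of where the difficulty lives, not a proof of the theorem.

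Separately, note that the paper you are reading never proves this statement: it cites HMV Theorem~3.3 for context and instead establishes the strictly stronger Corollary~\ref{abutterfly} (a butterfly realization for \emph{all} locally bounded matrix convex free functions over operator systems, with no rationality hypothesis) by a fundamentally different route. There, one first obtains real analyticity from the royal road theorem (Theorem~\ref{realsov}), then shows directly from the second derivative at $0$ that the Hankel-type coefficient matrix $\left[c_{\beta^*\alpha}\right]_{\alpha,\beta}$ (over words of length at least one) is positive semidefinite, uses it as an inner product to build the state Hilbert space, proves the shift operators are contractions via a spectral-radius bound on the Taylor coefficients, and takes a direct limit over finite-dimensional operator subsystems. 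That construction never sees a signature $J$ --- positivity of the Gram matrix is built in from the start --- which is what lets it escape rationality, finite-dimensionality, and minimality reductions entirely, whereas your route is inseparable from finite-dimensional rational realization theory. Both are legitimate approaches to the rational case, but if you want a proof along your lines you must actually supply the controllability-plus-propagation argument that forces $J\geq0$; without it, the entire content of the theorem is missing.
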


We prove the butterfly realization holds for general matrix convex functions in Corollary \ref{abutterfly}. 

\subsection{The royal road theorem}

The main result of the paper is contained in Section \ref{sec:auto}. It establishes that any class of real free noncommutative functions which consist of locally bounded functions which are analytic on one-dimensional slices in a controlled way and closed under some basic algebraic and analytic procedures are automatically analytic. We call such a class of functions a \dfn{sovereign class}. The class of matrix monotone functions and the class of matrix convex functions are each sovereign classes. Once we know such functions are real analytic, algebraic and functional analytic techniques allow us to obtain nice formulas for these functions. The content of our main theorem, Theorem \ref{realsov}, states the following: 

\centerline{{\bf ``Any function in a sovereign class is real analytic''}.}

\subsection{Structure of the paper}

In Section 2, we discuss analytic continuation in the operator system setting. In Section \ref{sec:auto}, we describe the structure of the domain and function classes under consideration, the so-called sovereign functions, and show that matrix monotone and matrix convex functions are examples. We also prove the ``royal road'' theorem, the main engine of the machine under construction, which asserts that sovereign functions are automatically real analytic. In Section 4, we prove analogues of the classical L\"owner and Kraus realizations. In Section 5, we show that, in analogy with the classical case, we can deduce analytic continuations from the L\"owner and Kraus realizations using the machinery of automatic analyticity in classes of sovereign functions established in Section \ref{sec:auto}.
%
%
%
%power series:
%
%\cite{vvw12} \symking
%\cite{bgm05} \symking
%\cite{bgm06} \symking
%
%Convex nc functions:
%\cite{kraus36} \symking
%\cite{helmconvex04} \symking
%\cite{hptdvconvex} \symking 
%\cite{dhmconvex} 
%\cite{helniesem} 
%\cite{Helton2012} \symking
%\cite{hhlm2008}
%
%
%loewner's theorem
%\cite{lo34} \symking
%\cite{wigner} \symking
%\cite{wigner2} \symking
%\cite{palfia} \symking
%\cite{pastdfree} \symking
%\cite{pascoelownote} \symking
%\cite{amyloew} \symking
%\cite{pascoeopsys} \symking
%
%representations
%
%%\cite{pastdcauchy}
%\cite{anw14} \symking
%\cite{will13} \symking
%\cite{will15} \symking
%\cite{agmc_gh} \symking 
%\cite{bmv18} \symking
%
%books
%\cite{simonlow} \symking
%\cite{bha97} \symking
%\cite{don74} \symking
%
%
%wedge-of-the-edge
%\cite{pascoecmb} \symking
%\cite{pascoeblmswedge} \symking
%\cite{rudeow} \symking

\section{Prelude: the quantitative wedge-of-the-edge theorem}

One of the key notions in the classical and several variable generalizations of the L\"owner and Kraus theorems is that of analytic continuation - that is, typically we are interested in extending functions from a ``real'' domain to some subset of a ``complex'' set. The edge-of-the-wedge theorem (proven by Bogoliubov and treated by Rudin in a series of lectures \cite{rudeow}) is useful in showing that such a continuation exists. Extremely flexible generalizations of this result to several variables have appeared in \cite{pascoecmb, pascoeblmswedge}. The key lemma from \cite{pascoecmb} follows, which we will need to generate quantitative bounds. In this section, we prove a version of the wedge-of-the-edge theorem in the operator system setting.

\begin{lemma}[Lemma 2.3, Pascoe \cite{pascoecmb}]
Fix $n.$ Fix $p>0.$ There are constants $C, K >0$ such that for every $S \subseteq [0,1]^n$ of measure greater than $p,$ and homogeneous polynomial $h$ of degree $d$ in $n$ variables which is bounded
by $1$ on $S$,
$|h(z)| \leq KC^d\|z\|_{\infty}^d.$
\end{lemma}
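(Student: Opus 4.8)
The plan is to reduce the inequality to three ingredients: a homogeneity normalization, a multivariate Remez-type inequality on the real cube, and the one-variable Bernstein--Walsh estimate applied one coordinate at a time. For the first, since $h$ is homogeneous of degree $d$, for any $z\in\C^n\setminus\{0\}$ we have $|h(z)|=\norm{z}_\infty^{\,d}\,|h(z/\norm{z}_\infty)|$, so it suffices to produce a constant $C>0$ depending only on $n$ and $p$ with $\sup\{|h(w)|: w\in\C^n,\ \norm{w}_\infty\le 1\}\le C^d$; the lemma then follows with $K=1$. Thus one only needs to bound $|h|$ on the closed unit polydisc $\cc{\D}^n$.

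The heart of the argument is the real cube estimate: there is $C_1=C_1(n,p)$ such that every polynomial $P$ of total degree $\le d$ in $n$ real variables with $|P|\le 1$ on $S$ satisfies $|P|\le C_1^{\,d}$ on $[0,1]^n$. For $n=1$ this is the classical Remez inequality: a measure-$\ge p$ hypothesis on a subset of $[0,1]$ forces $\norm{P}_{[0,1]}\le T_d\!\big(\tfrac{2-p}{p}\big)\le(4/p)^d$, where $T_d$ is the Chebyshev polynomial. For the step $n-1\to n$ I would slice: writing $x=(y,t)$ with $t\in[0,1]$ and $S_t=\{y:(y,t)\in S\}$, Fubini gives $\int_0^1|S_t|\,dt=|S|>p$, so $T=\{t:|S_t|>p/2\}$ has measure $>p/2$. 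For $t\in T$ the inductive hypothesis bounds $\sup_{y\in[0,1]^{n-1}}|P(y,t)|\le C_1(n-1,p/2)^d$; then for each fixed $y$ the one-variable polynomial $t\mapsto P(y,t)$ is $\le C_1(n-1,p/2)^d$ on the measure-$>p/2$ set $T$, and the $n=1$ case upgrades this to $\sup_{t\in[0,1]}|P(y,t)|\le(8/p)^d\,C_1(n-1,p/2)^d$. This produces $C_1(n,p)$, finite and independent of $d$. (Alternatively one may simply quote the Brudnyi--Ganzburg inequality, which gives the same conclusion.)

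Finally I would pass from $[0,1]^n$ to the complex polydisc one variable at a time: if $q$ has degree $\le d$ and $|q|\le M$ on $[0,1]$, then $|q(\zeta)|\le MC_2^{\,d}$ for $|\zeta|\le1$, with $C_2$ an absolute constant --- bound the Lebesgue function of equally spaced nodes of $[0,1]$ at points of the unit disc, using $\binom{d}{j}\le2^d$ and $d!\ge(d/e)^d$, or use the Green's function of $\C\setminus[0,1]$. Applying this in $z_1$ with $z_2,\dots,z_n$ held in $[0,1]$ extends the bound $C_1^{\,d}$ to $|z_1|\le1$ at the cost of a factor $C_2^{\,d}$, and iterating through $z_2,\dots,z_n$ yields $\sup\{|h(w)|:\norm{w}_\infty\le1\}\le(C_1C_2^{\,n})^d$; with the homogeneity reduction this is the claim, $C=C_1C_2^{\,n}$ and $K=1$. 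The main obstacle is the cube estimate, and specifically checking that the loss at each level of the induction is only exponential in $d$ (a clean factor $C^d$) rather than something worse like $d^{\,d}$ or $C^{\,d^2}$; what makes it work is that the Fubini step produces a positive-measure family of good slices while only halving the parameter, and the one-variable Remez constant $(2-p)/p$ has $d$-th power exactly equal to the admissible growth rate. The Bernstein--Walsh step is then routine, since $[0,1]^n$ is polynomially determining with a dimension-free constant on the polydisc.
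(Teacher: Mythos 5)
Your proof is correct in its essentials. Note first that the paper does not actually prove this lemma: it is quoted verbatim from \cite{pascoecmb}, and the only commentary offered is that the proof there is ``essentially the product of Lagrange interpolation, blind faith, and elbow grease'' --- i.e.\ an interpolation argument on a well-spread grid of points found inside $S$. Your route is genuinely different and more modular: you normalize by homogeneity to reduce to bounding $h$ on the closed polydisc, invoke (or re-derive by Fubini slicing plus one-variable Remez) the Brudnyi--Ganzburg inequality to pass from $S$ to all of $[0,1]^n$ with loss $C_1^d$, and then use Bernstein--Walsh to inflate $[0,1]$ to $\cc{\D}$ one coordinate at a time with loss $C_2^{nd}$. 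The inductive Remez step is the part that needs care and you handle it correctly: the Fubini argument does give $|T|>p/2$, the parameter only halves at each of the $n$ levels, and each level costs a clean $d$-th power, so the recursion $C_1(n,p)=(8/p)\,C_1(n-1,p/2)$ closes. Two small points worth recording if you write this up: the Remez inequality is usually stated for real-valued polynomials, and $h$ here may have complex coefficients, so apply it to $|h|^2$ (a real polynomial of degree $2d$ on real points) or to $\RE(e^{i\theta}h)$ uniformly in $\theta$; and the Lagrange-interpolation version of Bernstein--Walsh carries a harmless prefactor $(d+1)$ per coordinate, which you either absorb into $K$ or avoid via the Green's function of $\C\setminus[0,1]$. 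What your approach buys is that all the ``elbow grease'' is outsourced to classical, citable inequalities; what the original interpolation proof buys is self-containedness and explicit constants. Either yields the $KC^d\|z\|_\infty^d$ bound, which is all the quantitative wedge-of-the-edge corollary requires.
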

Such an assertion seems foolish, but it is essentially the product of Lagrange interpolation, blind faith, and elbow grease. 

Let $R_1, R_2$ be vector spaces. Define a (noncommutative) \dfn{generalized homogeneous polynomial of degree $d$} to be a (free) function on $R_1$ such that the restriction to
any finite dimensional space is an $R_2$-valued (noncommutative) homogeneous polynomial
of degree $d$.
\begin{lemma}
There are universal constants $C, K >0$  satisfying the following.
Let $R$ be an operator system.
Let $W$ be the set of positive contractions in $R$ ($\mathcal{S}(R)$ in the noncommutative case).
Let $h$ be a (noncommutative) generalized homogeneous polynomial of degree $d$ which is norm bounded by $1$ on $W.$
Then,
$\|h(Z)\| \leq KC^d\|Z\|^d.$
\end{lemma}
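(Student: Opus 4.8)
The plan is to reduce the operator-system statement to the scalar polynomial estimate in Lemma 2.3 of \cite{pascoecmb} by testing the generalized homogeneous polynomial $h$ against coordinates. First I would fix a self-adjoint $Z \in \mathcal{S}(R)_m$ (so $Z$ is an $m\times m$ Hermitian matrix over $R$, or just $Z\in R$ in the scalar case) and, after scaling, assume $\|Z\|\le 1$; we must produce the bound $\|h(Z)\|\le KC^d$. The key observation is that the positive contractions $W$ (resp. $\mathcal{S}(R)$-valued positive contractions) contain an abundance of commuting families: writing $Z = Z_+ - Z_-$ with $Z_\pm \ge 0$ contractions, and more generally forming, for $t=(t_1,\dots,t_n)\in[0,1]^n$, the positive contraction $Z(t) = \sum t_j E_j$ where $\{E_j\}$ is a partition-of-unity-type family of positive contractions adapted to $Z$ (e.g.\ obtained from a spectral-type decomposition of $Z$ built inside the ambient $C^\ast$-algebra, or simply $E_1 = \tfrac{1+Z}{2}$, $E_2 = \tfrac{1-Z}{2}$ in the two-point case), the map $t \mapsto h(Z(t))$ is an $R_2$-valued homogeneous polynomial of degree $d$ in $t$ because $h$ restricted to the finite-dimensional span of the $E_j$ is a homogeneous polynomial of degree $d$ by hypothesis. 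Since each $Z(t)$ is a positive contraction, this polynomial is norm-bounded by $1$ on all of $[0,1]^n$, in particular on any subset $S$ of measure $\ge p$.

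Next I would pass from the $R_2$-valued (or operator-valued) polynomial $t\mapsto h(Z(t))$ to genuine scalar polynomials by applying an arbitrary state, or more simply a norming functional $\phi$ in the unit ball of $R_2^\ast$ (resp.\ a vector functional $\langle \cdot\, \xi,\eta\rangle$ in the matrix case). Then $g(t) := \phi(h(Z(t)))$ is a scalar homogeneous polynomial of degree $d$ in $n$ variables, bounded by $1$ on $[0,1]^n$. Lemma 2.3 of \cite{pascoecmb} applies with, say, $p=1$ and gives $|g(z)| \le KC^d\|z\|_\infty^d$ for the universal constants $C,K$ attached to $(n,p)=(2,1)$ (the $n=2$ case suffices: two positive contractions $E_1,E_2$ summing to $1$ already recover $Z = 2E_1 - 1 = 1 - 2E_2$, and feeding $(z_1,z_2)=(1,1)$ reconstructs $Z$ itself while feeding complex $z$ gives the continuation). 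Taking the supremum over all norming $\phi$ recovers $\|h(Z(z))\| \le KC^d\|z\|_\infty^d$, and specializing $z$ to the point that reconstructs $Z$ (e.g.\ $z=(1,1)$, giving $Z(1,1) = E_1 + E_2$ rescaled appropriately so that $\|Z(z)\| = \|Z\| \le 1$) yields $\|h(Z)\| \le KC^d = KC^d\|Z\|^d$ after undoing the initial normalization. The constants $C,K$ depend only on the fixed choice $n=2$, $p=1$, hence are universal as claimed.

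The main obstacle I anticipate is the construction of the family $\{E_j\}$ of positive contractions that is genuinely internal to the operator system $R$ (not just the ambient $C^\ast$-algebra) and for which $h(Z(t))$ is visibly a degree-$d$ homogeneous polynomial in $t$ with the stated norm bound; one must check that $Z(t)$ stays in $W$ for all $t\in[0,1]^n$, that $t\mapsto Z(t)$ is affine into a finite-dimensional subspace of $\mathcal{S}(R)$ so that the ``generalized homogeneous polynomial'' hypothesis kicks in, and that some specialization of the $t$-variables (allowed to become complex via analytic continuation of the polynomial identity) returns exactly $Z$ with controlled norm. In the noncommutative case there is the additional bookkeeping that $Z \in \mathcal{S}(R)_m$ lives at level $m$, so the $E_j$ must be taken in $\mathcal{S}(R)_m$ and one works with $M_m$-valued rather than scalar functionals, but this changes nothing essential since Lemma 2.3 is applied entrywise after pairing with vectors. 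A secondary subtlety is ensuring the estimate is dimension-free in $m$: this is automatic because Lemma 2.3's constants depend only on $n$ and $p$, not on the target, and the reduction to scalars via unit-ball functionals does not see $m$.
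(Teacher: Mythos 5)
Your overall strategy — decompose $Z$ into positive contractions, restrict $h$ to the finite-dimensional affine slice they span, compose with a norm-one linear functional to reach a scalar homogeneous polynomial bounded by $1$ on $[0,1]^n$, invoke Lemma 2.3 of \cite{pascoecmb}, and recover the operator norm by Hahn--Banach — is the paper's argument. However, there is a genuine gap: you begin by fixing a \emph{self-adjoint} $Z \in \mathcal{S}(R)_m$, but the lemma must hold for arbitrary $Z$ in the full matrix universe $\mathcal{M}(R)$. This is not a cosmetic point: the whole purpose of the lemma is to feed the subsequent quantitative wedge-of-the-edge corollary, which produces a function analytic and bounded on the \emph{complex} ball $B_\C(0,\delta)$, so the estimate $\|h(Z)\|\le KC^d\|Z\|^d$ must apply to non-Hermitian $Z$. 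For general $Z$ one first splits $Z = X + iY$ into real and imaginary parts and then splits each of $X,Y$ into positive parts, yielding a decomposition $Z = A - B + iC - iD$ with $A,B,C,D \ge 0$ of controlled norm; this forces a \emph{four}-variable application of the scalar lemma, not the two-variable one you assert suffices. Your "$n=2$ suffices" claim is correct only on the self-adjoint slice.

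Two smaller slips worth fixing: (i) with $E_1=\tfrac{1+Z}{2}$, $E_2=\tfrac{1-Z}{2}$ you have $Z = E_1 - E_2$, so the complex specialization that reconstructs $Z$ is $(z_1,z_2)=(1,-1)$, not $(1,1)$ (and $Z(1,1)=E_1+E_2=1$); once scaled so that $Z(t)$ remains a positive contraction for $t\in[0,1]^n$, the reconstruction point has $\|z\|_\infty$ equal to the scaling factor, and the resulting power of it is absorbed into the constant $C$. (ii) Taking $p=1$ in the cited lemma makes the hypothesis vacuous, since no $S \subseteq [0,1]^n$ has measure strictly greater than $1$; one should take $S=[0,1]^n$ and any $p<1$. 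With the four-part decomposition and these repairs in place, the argument goes through and coincides with the paper's.
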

\begin{proof}
	It is enough to prove the claim when $\|Z\|=1,$ as both sides are homogeneous of degree $d$.
	Write $Z  = A - B+ iC - iD$ for positive $A, B, C, D$, where the norms of $A, B, C, D$ are less than 
	$2\|Z\|.$
	The function of four variables $f(x_1,x_2,x_3,x_4)=h((x_1A+x_2B+x_3C+x_4D)/8)$ satisfies 
	the preceeding lemma when composed with any norm $1$ linear functional for $S = [0,1]^4$, so, by the Hahn-Banach theorem, $\|h(Z)\|= \|f(8,8,8,8)\| \leq KC^d 8^d.$ 
\end{proof}

Define the \dfn{complex ball around $X$ of radius $\ep$,} denoted $B_\C(X, \ep),$ to be 
\[B_\C(X, \ep)=\bigcup_m \{Y\in \mathcal{M}(R)_{mn}|\norm{X^{\oplus m}-Y}<\ep\}.\]
Define the \dfn{real ball around $X$ of radius $\ep$,} denoted $B_\R(X, \ep)$ to be 
\[B_\R(X, \ep)=\bigcup_m \{Y\in \mathcal{S}(R)_{mn}|\norm{X^{\oplus m}-Y}<\ep\}.\]

The following corollary follows immediately from the preceding lemma.
\begin{corollary}[The quantitative wedge-of-the-edge theorem]
There are universal constants $\delta, \ep >0$  satisfying the following.
Let $R$ be an operator system.
Let $W$ be the set of positive contractions in $R$ ($\mathcal{S}(R)$ in the noncommutative case).
Let $h_d$ be a sequence of (noncommutative) generalized homogeneous polynomials of degree $d$ such that
$\sum \|h_d(X)\|$ is bounded by $1$ on $W.$
The formula $\sum h_d(Z)$ defines a (noncommutative) analytic function on $B_{\C}(0,\delta)$ which is bounded by $\ep.$
\end{corollary}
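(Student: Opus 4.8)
The plan is to deduce the corollary as a straightforward packaging of the preceding lemma, upgrading the pointwise homogeneous bound to a bound on the full series and recognizing the series as an analytic function on a genuine complex ball.

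\medskip

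First I would apply the preceding lemma to each $h_d$ individually. The hypothesis is that $\sum_d \|h_d(X)\| \leq 1$ for all $X \in W$, which in particular forces $\|h_d(X)\| \leq 1$ on $W$ for every $d$. So each $h_d$ is a generalized homogeneous polynomial of degree $d$ that is norm-bounded by $1$ on $W$, and the lemma gives universal $C, K > 0$ with $\|h_d(Z)\| \leq K C^d \|Z\|^d$ for all $Z$. Summing over $d$, for $\|Z\| < \delta$ with $\delta < 1/C$ we get
\[
\sum_d \|h_d(Z)\| \leq K \sum_d (C\|Z\|)^d = \frac{K}{1 - C\|Z\|},
\]
which is finite and, choosing $\delta$ small enough (say $\delta = 1/(2C)$), bounded by $\ep := 2K$ on $B_\C(0,\delta)$. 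This shows the series $\sum_d h_d(Z)$ converges absolutely and uniformly on $B_\C(0,\delta)$ and is bounded there by $\ep$.

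\medskip

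The remaining point is that the sum defines a \emph{free analytic} function on $B_\C(0,\delta)$. Here I would argue that each partial sum $\sum_{d \le N} h_d$ is a free function (restrictions to finite-dimensional subspaces are honest polynomials, hence free and locally bounded), and that a locally uniform limit of free functions which is locally bounded is again a free function and is analytic — this is a standard fact in free function theory (restricting to any finite-dimensional slice, we have a locally bounded pointwise limit of polynomials, hence a holomorphic function by Montel/Vitali, and the free structure — direct sums and similarities — passes to the limit). Since $B_\C(0,\delta)$ is a free domain (it is a union over $m$ of ordinary complex balls, visibly closed under direct sums and unitary — indeed similarity — conjugation near $0$, and open), the limit function $F(Z) = \sum_d h_d(Z)$ is a bona fide noncommutative analytic function on it, bounded by $\ep$.

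\medskip

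I do not anticipate a serious obstacle here: the corollary is labeled as following ``immediately'' from the lemma, and indeed the only real input is the geometric-series estimate together with the routine fact that locally uniform, locally bounded limits of free polynomials are free analytic. The one place requiring a little care is confirming that $B_\C(0, \delta)$ genuinely satisfies the free-domain axioms — in particular the direct-sum and conjugation-invariance axioms — but this is immediate from the definition of $B_\C(X,\ep)$ via the $\oplus m$ amplifications and the unitary invariance of the operator norm; for conjugation by a general invertible $S$ one works on a slightly smaller ball controlled by $\|S\|\,\|S^{-1}\|$, which is harmless since analyticity is a local statement and the free-function identity then propagates by the usual argument.
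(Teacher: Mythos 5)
Your proof is correct and supplies exactly the argument the paper leaves implicit (the paper merely asserts that the corollary ``follows immediately from the preceding lemma''): extract $\|h_d(X)\|\le 1$ on $W$ from the hypothesis, apply the lemma term by term to get $\|h_d(Z)\|\le KC^d\|Z\|^d$, and sum the geometric series with $\delta=1/(2C)$, $\ep=2K$. The closing observation that a locally uniformly convergent, locally bounded series of free (generalized homogeneous) polynomials defines a free analytic function on the nc ball $B_\C(0,\delta)$ is the standard fact the authors have in mind, so your route matches the intended one.
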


\section{ Automatic analyticity in sovereign classes}\label{sec:auto}

Let $G \subseteq \M(R)$. We define the \dfn{coordinatization} of $G$, denoted $G^{(n)}$, to be the natural inclusion of $(G_{mn})_{m=1}^\infty$ into $\M(R \otimes M_n(\C))$. 

Let a \dfn{dominion} $\GG$ be a class of domains satisfying:
\begin{description}
\item[Translation invariance] For all $ G \in \GG$ and $r \in R$, $G + r \in \GG$.
\item[Closure under intersection] For all $G, H \in \GG$, $G \cap H \in \GG$. 
\item[Closure under coordinatization] If $G \in \GG$, then $G^{(n)} \in \GG.$
\item[Locality] 
Let $G \in \GG.$ For any $X \in G_1,$ there is an $\epsilon >0$ such that $B_\R(X,\ep) \subseteq G$ and $B_\R(X,\ep)\in \GG.$
%For any operator system $R, X \in \M(R)_1$ and $\ep > 0$, $B_\R(X,\ep) \in \GG$.
%\item[Convexity] Every $G \in \GG$ is convex.
\item[Scale invariance] If $t > 0$ and $G \in \GG$, $tG \in \GG$. 
\end{description}

An example of a dominion is the class of all matrix convex sets, which we denote $\conv$.

A \dfn{sovereign class} is a class of functions $\sov$ on domains contained in a dominion $\GG$ satisfying:
\begin{description}
\item[Functions] For all $G \in \GG$, $\sov(G) \subseteq \funct(G)$, where $\sov(G)$ denotes the 
functions in $\sov$ on the domain $G$ and $\funct(G)$ denotes the class of free functions on $G$.
\item[Local boundedness] Each $f \in \sov(G)$ is locally bounded and measurable on finite dimensional affine subspaces on each level.
\item[Closure under localization] If $f \in \sov(G)$ and $H \subseteq G$ then $f|_{H} \in \sov(H)$.
\item[Closure under coordinatization] If $f \in \sov(G)$, then $f^{(n)} \in \sov(G^{(n)})$.
\item[Closure under convolution] The set of functions $\sov(G)$  taking values in $\mathcal{S}(R)$ is convex and closed under pointwise weak limits. 
\item[One-variable knowledge] If $A \leq B$ then 
\[
f_{\cc{AB}}(t) := f\left(\frac{1 - t}{2}A + \frac{1 + t}{2}B \right)
\] 
analytically continues to $\D$ as a function of $t$.
\item[Control] There is a map $\gamma$ taking each pair $(X, f)$ to a non-negative number satisfying:
\begin{enumerate}
\item {For each $\ep>0$ there is a universal constant
$c(\ep)$ such that $\inf_{X\in B_\R(X_0,\ep)_1 
} \gamma(X,f) \leq c(\ep)\|f\|_{B_\R(X_0,\ep)_1}.$}
\item There is a universal positive valued function $e$ on $\R^+$ satisfying the following. Write $f_{\cc{AB}}(t) = \sum a_n t^n$. Then 
\[
\norm{a_n} \leq \gamma(X, f)e(\norm{B - A}).
\]
Note that, if the class is closed under composition with positive, norm one, linear functionals, and $\gamma(X,\lambda(f)) \leq \gamma(X,f),$  it is sufficient to check this when $R_2=\R$ by the Hahn-Banach theorem.
\item If $H \subseteq G$ and $X \in H$ then $\gamma(X, f|_{H}) = \gamma(X, f)$.
\item $\gamma(X, f) = \gamma(X^{\oplus N}, f)$.
\item $\gamma(X^{(n)},f^{(n)}) = \gamma(X, f)$.
\end{enumerate} 
\end{description}

We consider two specific sovereign classes: monotone functions, and convex functions on the dominion $\conv$.

We define the \dfn{positive-orthant norm of the $n$-th derivative at $X$}, denoted $\|D^nf(X)\|$, to be
\[
\|D^nf(X)\|=\sup_{\|H\|=1, H>0, m} \|D^nf(X^{\oplus m})[H]\|,
\] where $D^nf(X)[H] = \frac{d^n}{dt^n} f(X+tH).$

\begin{proposition}
The matrix monotone functions on domains in $\conv$ are a sovereign class.
\end{proposition}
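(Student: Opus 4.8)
The plan is to verify each bullet point in the definition of sovereign class for the class of matrix monotone functions on the dominion $\conv$. Many of these verifications are essentially bookkeeping: the \textbf{Functions} axiom holds because matrix monotone functions are by definition free functions; \textbf{Local boundedness} follows because a matrix monotone function on an interval is monotone in the scalar variable and hence bounded and measurable on line segments (and more generally on finite-dimensional affine slices by restricting to order intervals); \textbf{Closure under localization} is immediate since the defining inequality $A\le B\Rightarrow f(A)\le f(B)$ is inherited by any sub-domain; and \textbf{Closure under coordinatization} holds because the map $X\mapsto X^{(n)}$ is a unital order isomorphism onto $\M(R\otimes M_n(\C))$-slices, so monotonicity is preserved. \textbf{Closure under convolution}: convexity of the set of $\mathcal S(R)$-valued monotone functions is clear since a convex combination of monotone maps is monotone; closure under pointwise weak limits follows because the order $A\le B$ is preserved under weak limits of the values.

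The substantive axioms are \textbf{One-variable knowledge} and \textbf{Control}. For one-variable knowledge, given self-adjoint $A\le B$ in the domain, the function $t\mapsto f_{\cc{AB}}(t)=f\bigl(\tfrac{1-t}{2}A+\tfrac{1+t}{2}B\bigr)$ is, for $t\in(-1,1)$, a matrix monotone function of the real parameter in the usual one-variable sense (increasing $t$ increases the argument in the order), with values in $\mathcal S(R_2)\subseteq\mathcal S(\text{a }C^*\text{-algebra})$. Composing with an arbitrary state (norm-one positive linear functional) $\lambda$ on $R_2$ produces a genuine scalar-valued matrix monotone function on $(-1,1)$ — here one must check that $\lambda$ applied level-wise respects the matrix order, which it does because $\lambda\otimes\mathrm{id}_{M_n}$ is positive. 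By the classical one-variable L\"owner theorem (Theorem \ref{low}), each such scalar function extends analytically to the upper half plane, hence in particular to $\D$ after the M\"obius change of variables carrying $\D$ into (a neighborhood modelled on) the slit plane; by a Hahn–Banach/weak-holomorphy argument the vector-valued function $f_{\cc{AB}}$ itself continues analytically to $\D$.

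For \textbf{Control}, the natural choice is $\gamma(X,f)=\|Df(X)\|$, the positive-orthant norm of the first derivative defined just above the proposition (or a small multiple thereof). Property (1) follows from a Cauchy-estimate / mean-value argument: the derivative of a monotone function at $X$ in a positive direction is controlled by the oscillation of $f$ on a ball, giving $\inf_{X\in B_\R(X_0,\ep)_1}\gamma(X,f)\le c(\ep)\|f\|_{B_\R(X_0,\ep)_1}$ with an absolute $c(\ep)$. Properties (3), (4), (5) — invariance under restriction, ampliation by direct sums, and coordinatization — are immediate from the definition of $\|Df(X)\|$ as a supremum over positive unit directions and all ampliations, since these operations neither enlarge nor shrink the relevant set of directions. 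The key estimate is property (2): writing $f_{\cc{AB}}(t)=\sum a_n t^n$, I must bound $\|a_n\|$ by $\gamma(X,f)\,e(\|B-A\|)$ for a universal function $e$. By the reduction noted in the axiom (closure under composition with states, together with $\gamma(X,\lambda f)\le\gamma(X,f)$ which holds since $\|D(\lambda f)(X)\|\le\|Df(X)\|$), it suffices to treat scalar matrix monotone functions; there the Taylor coefficients $a_n$ of $f_{\cc{AB}}$ at the center $t=0$ are governed by the Nevanlinna representation $f(x)=a+\int \frac{x}{1+tx}\,d\mu(t)$ — differentiating and evaluating gives a geometric-type bound $|a_n|\le f'(\text{center})\cdot\|B-A\|^{n}\cdot(\text{const})^n$ once one controls the total mass of $\mu$ in terms of the derivative at the center, which is exactly $\gamma$.

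The main obstacle I anticipate is making property (2) uniform and clean: extracting from the one-variable Nevanlinna/L\"owner data an estimate on the Taylor coefficients of $f_{\cc{AB}}$ that (i) is linear in a single geometric quantity $\gamma(X,f)$ attached to the point rather than to the pair $(A,B)$, and (ii) has the decay in $n$ packaged into a universal function $e(\|B-A\|)$ independent of $f$. The resolution should come from observing that the disc of analyticity guaranteed by L\"owner's theorem, after the M\"obius normalization, has a radius bounded below in terms of $\|B-A\|$ alone (the argument $\tfrac{1-t}{2}A+\tfrac{1+t}{2}B$ stays in the real domain, hence $f_{\cc{AB}}$ is analytic, for $t$ in a definite neighborhood of $[-1,1]$ scaling with $\|B-A\|$), combined with a Schwarz–Pick-type bound relating $\sup|f_{\cc{AB}}|$ on that disc to $f'$ at the center via the half-plane geometry; then Cauchy's estimate yields $\|a_n\|\le \gamma(X,f)\,e(\|B-A\|)$ with $e$ of the desired universal form.
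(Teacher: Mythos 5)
Your plan tracks the paper's structure closely, and the straightforward axioms (functions, localization, coordinatization, convolution, one-variable knowledge via L\"owner's theorem) are verified in essentially the same way the paper does. The substantive part is the control map, and here there are two issues.

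First, a genuine gap: the choice $\gamma(X,f)=\|Df(X)\|$ cannot satisfy control property (2) as stated. Writing $f_{\cc{AB}}(t)=\sum a_n t^n$, the coefficient $a_0=f_{\cc{AB}}(0)=f\bigl(\tfrac{A+B}{2}\bigr)$ must also be dominated by $\gamma(X,f)\,e(\|B-A\|)$. But a nonzero constant function $f\equiv C$ is matrix monotone, has $Df\equiv 0$ hence $\gamma=0$, yet $a_0=C\neq 0$; so $\|Df(X)\|$ alone fails. The paper's control map is $\gamma(X,f)=\|f(X)\|+\|Df(X)\|$, with the first term precisely to absorb the zeroth Taylor coefficient, and you should adopt this (or explicitly argue that the axiom need only be verified for $n\geq 1$, with $n=0$ handled separately by local boundedness --- but that is not how the paper reads).

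Second, your route through Schwarz--Pick and Cauchy estimates for property (2) is more indirect than what the Nevanlinna representation already hands you, and the claimed bound $|a_n|\le f'(\text{center})\cdot\|B-A\|^{n}\cdot(\text{const})^n$ is not the right shape. From $f_{\cc{AB}}(t)=a_0+\int\frac{t}{1+st}\,d\mu(s)$ one reads off directly $a_n=(-1)^{n-1}\int s^{n-1}\,d\mu(s)$ for $n\geq 1$, whence $|a_n|\leq\int d\mu=a_1$ with no $n$-dependence at all; $a_1$ is then controlled linearly by $\|Df\|\cdot\|B-A\|$, giving $e(\|B-A\|)$ of the form (constant)$\cdot\|B-A\|$ and no need for the disc-radius or conformal estimates you anticipate. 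This is the paper's argument and is considerably shorter. (As a small additional point, no M\"obius normalization is needed to continue to $\D$: a one-variable matrix monotone function on $(-1,1)$ continues to $\Pi\cup\overline{\Pi}\cup(-1,1)=\C\setminus\bigl((-\infty,-1]\cup[1,\infty)\bigr)$, which already contains $\D$.)
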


\begin{proof}
Monotone functions are functions. To see local boundedness, note that $f(X + 1)$ and $f(X - 1)$ bound $f(X + H)$ for all $\norm{H} < 1$. That is, as 
\[
X - 1 \leq X + H \leq  X + 1,
\]
monotonicity implies 
\[
f(X - 1) \leq f(X + H) \leq  f(X + 1).
\]
The restriction of a monotone function to a convex set remains a monotone function. Likewise, coordinatization preserves monotonicity. That the monotone functions are closed under convolution follows from the fact that the defining inequality for monotonicity is linear. Monotone functions analytically continue to the upper half plane and lower half plane, and thus the disk $\D$, whenever $(-1,1)$ is in the domain as is the case for $f(\frac{1-t}{2}A + \frac{1 + t}2 B)$.

Fix $\ep >0$. Suppose that $B_{\R}(X, \ep)$ is contained in the domain of $f$. Without loss of generality, $0 = X$. Fix $H\geq 0$ in $B_{\R}(0,\ep)$. So $f(zH)$ has a Nevanlinna type representation given by
\begin{align*}
f(zH) &= a_0 + \int_{[-1,1]} \frac{z}{tz + 1} \, d\mu(t) \\
&= a_0 + z \sum_{i=0}^\infty \int t^i z^i \, d\mu(t).
\end{align*}
Note that this shows that $|a_n| = \int |t|^{n-1}  \, d\mu(t)\leq \int  \, d\mu(t)  = a_1 $ for $n\geq 1.$
Moreover,
\[
f(zH) - f(-zH) = 2 z \sum_{i=0}^\infty \int z^{2i} t^{2i} \, d\mu.
\]
This shows that
\[
\norm{Df(0)[H]} \leq \norm{f}_{B_\R(0,\ep)}. 
\]
Therefore, 
\[
\norm{Df(0)[H]} \leq \frac{1}{\ep} \norm{f}_{B_\R(0,\ep)}.
\]
Now, a control function is given by the formula
\[
\gamma(X,f) = \norm{f(X)} + \norm{Df(X)}, 
\]
which is bounded by $(1 + \frac{1}{\ep}) \norm{f}_{B_\R(0, \ep)}$.
 
\end{proof}
 
\begin{proposition}
The locally bounded matrix convex functions on domains in $\conv$ are a sovereign class.
\end{proposition}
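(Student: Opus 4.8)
The plan is to verify the sovereign-class axioms one at a time for $\sov(G)$ taken to be the class of locally bounded matrix convex free functions $G \to \mathcal{S}(R_2)$, the dominion being $\conv$ (already noted to be a dominion). Three axioms come for free: \textbf{Functions} holds because ``matrix convex function'' was defined to mean a free function obeying $f(\tfrac{A+B}{2}) \le \tfrac{f(A)+f(B)}{2}$; and \textbf{Closure under localization} and \textbf{Closure under coordinatization} hold because restricting to a convex subdomain and passing to $f^{(n)}$ (under which the orderings on $S_m(\C)\otimes(R\otimes M_n(\C))$ and $S_{mn}(\C)\otimes R$ agree) visibly preserve both the defining inequality and the free-function axioms. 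For \textbf{Local boundedness} I would first observe that local boundedness is automatic: near any $X_0 \in G_n$ choose a small simplex with vertices $v_i \in G_n$; matrix convexity and $a \le \norm{a}1$ give $f \le (\max_i\norm{f(v_i)})1$ on the simplex, and then $f(X_0) \le \tfrac12 f(X_0+H) + \tfrac12 f(X_0-H)$ bounds $f$ below near $X_0$, so $f$ is norm bounded there; a locally bounded convex function is continuous (each state composed with $f$ is locally Lipschitz with a constant uniform over states), hence measurable on affine slices. This automatic local boundedness also settles \textbf{Closure under convolution}: convex combinations of matrix convex functions are again matrix convex and locally bounded, and a pointwise weak limit of free matrix convex functions is a free matrix convex function (the positive cone is weakly closed) which is then locally bounded by the simplex argument.

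The content is in \textbf{One-variable knowledge} and \textbf{Control}, which is where Kraus's theorem (Theorem \ref{kraus}) does the work. For \textbf{One-variable knowledge}, fix $A \le B$ in $G_k$. The assignment $t \mapsto \tfrac{1-t}{2}A + \tfrac{1+t}{2}B = \tfrac{A+B}{2} + t\tfrac{B-A}{2}$ is the scalar restriction of an affine free map $\Phi$ from the free domain $\{\norm{T} < 1\} \subseteq \mathcal{S}(\R)$ into $G$ (that $\Phi(T) \in G$ follows by diagonalizing $T$, using that scalar points land in $G$, and the unitary invariance of $G$). Composition of a matrix convex function with an affine free map is matrix convex, and composition with a state $\omega$ — which is automatically completely positive — preserves matrix convexity, so $\omega \circ f_{\cc{AB}}$ is a scalar matrix convex function on $(-1,1)$; by Kraus's theorem it equals $a_\omega + b_\omega t + \int_{[-1,1]} \tfrac{t^2}{1+st}\, d\mu_\omega(s)$, which continues analytically to $\D$ because the kernel $\tfrac{t^2}{1+st}$ has its only pole at $t = -1/s$, with $\abs{1/s} \ge 1$ for $s \in [-1,1]$. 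Since $\mu_\omega([-1,1])$, $\abs{a_\omega}$, $\abs{b_\omega}$ are bounded uniformly in $\omega$ by $\sup_{(-1,1)}\norm{f_{\cc{AB}}}$, which is finite by local boundedness, the Taylor coefficients of $f_{\cc{AB}}$ at $0$ are norm bounded, $f_{\cc{AB}}$ is real analytic, and the sum of its Taylor series furnishes the required $\mathcal{S}(R_2)$-valued continuation to $\D$.

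For \textbf{Control}, take $\gamma(X,f) = \norm{f(X)} + \norm{Df(X)} + \norm{D^2 f(X)}$ with the positive-orthant norms of the derivatives. Items (3)--(5) (invariance under localization, direct sums, coordinatization) are immediate from the definition of those norms, the supremum over direct summands already being built into them. For item (2): with $X = \tfrac{A+B}{2}$, the expansion above gives $a_0 = f(X)$, $a_1 = Df(X)[\tfrac{B-A}{2}]$, and $a_2 = \tfrac18 D^2 f(X)[B-A] \ge 0$ (as $B - A \ge 0$ and second derivatives of convex functions are positive), while $\norm{a_n} \le \norm{a_2}$ for $n \ge 2$ by the Kraus bound $\abs{\int s^{n-2}\, d\mu} \le \mu([-1,1])$ — verified for scalar-valued $f$ by composing with states (the class is closed under norm-one positive functionals, under which $\gamma$ does not increase) and lifted back by taking a supremum over states; since $B - A \ge 0$ these give $\norm{a_n} \le \gamma(X,f)\, e(\norm{B-A})$ with $e(r) = \max(1, r/2, r^2/8)$. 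Item (1) is the quantitative estimate and the main obstacle: with $X_0 = 0$, restrict $f$ to a line $t \mapsto f(tH)$ through $0$ in a positive unit direction $H$ with $(-\ep,\ep)H \subseteq G$, apply Kraus after rescaling to $(-1,1)$, and extract bounds on $\norm{f(0)}$, $\norm{Df(0)[H]}$, and $\norm{D^2 f(0)[H]}$ in terms of $\sup_{\abs t < \ep}\norm{f(tH)} \le \norm{f}_{B_\R(0,\ep)}$ via differences such as $f(tH) + f(-tH) - 2f(0) \ge 2t^2 a_2 \ge 0$ and, after a state, $\omega(f(tH) - f(-tH)) - 2tb_\omega = \int \tfrac{-2st^3}{1-s^2t^2}\, d\mu_\omega(s)$; taking the supremum over positive unit $H$ gives $\gamma(0,f) \le c(\ep)\,\norm{f}_{B_\R(0,\ep)}$ with $c(\ep) = O(1 + \ep^{-1} + \ep^{-2})$, so the infimum in (1) is already attained at $X_0$.

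I expect the genuine difficulty to be concentrated in Control, item (1): converting one sup-norm bound on a line restriction into separate bounds on the value, the first derivative, and the second derivative at the centre requires disentangling the linear coefficient $b$ from the mass of the Kraus measure, the same step that fails for polynomials of degree greater than $2$ and that distinguishes the convex theory from the monotone one. Everything else is bookkeeping with the free-function axioms and the automatic continuity of convex functions, following the monotone proposition's proof with one extra derivative in play.
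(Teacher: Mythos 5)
Your proof follows the same route as the paper's: verify the easy closure axioms, invoke Kraus for one-variable knowledge, and establish control via the Kraus representation of $f(zH)$ with $\gamma(X,f)=\|f(X)\|+\|Df(X)\|+\|D^2f(X)\|$, getting $c(\ep)=O(1+\ep^{-1}+\ep^{-2})$ as the paper does (it produces the explicit constant $1+5/\ep+1/\ep^2$). The one genuine flaw is your opening claim that local boundedness is automatic. Your simplex argument only bounds $f$ on the (finite-dimensional) simplex itself, not on a norm-ball around $X_0$ in an infinite-dimensional $R$. In fact the paper's remark immediately after this proposition points out that arbitrary linear maps on an operator system are matrix convex but need not be bounded, so local boundedness genuinely cannot be derived and must be taken as a hypothesis --- which is exactly why the proposition restricts to \emph{locally bounded} matrix convex functions. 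Your simplex argument does correctly yield boundedness and continuity of $f$ restricted to finite-dimensional affine slices (the part of the \textbf{Local boundedness} axiom that gives measurability), and the rest of your proof does not depend on the false claim, so this is an excisable error rather than a structural one. You should also be slightly more careful with \textbf{Closure under convolution}: the set that must be weak-limit closed is the set of \emph{locally bounded} matrix convex functions, and a pointwise weak limit does not automatically inherit local boundedness from the approximants; in the paper this is benign because the only limits ever used are mollifications $\ph_\alpha * f$ of a single locally bounded $f$, which are locally bounded by dominated convergence.
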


\begin{proof}
Convex functions are functions. The restriction of a convex function to a subdomain remains convex. The coordinatization of a convex function is convex. Closure under convolution follows from the fact that the defining inequality for convexity is linear. By the Kraus theorem, these functions satisfy one variable knowledge.

Fix $\ep >0$. Suppose that $B_{\R}(X, \ep)$ is contained in the domain of $f$. Without loss of generality, $0 = X$. Fix $H$ in $B_{\R}(0,\ep)$.
 The function $f(zH)$ has a Kraus type representation
\[
f(zH) = a + bz + \int_{[-1,1]} \frac{z^2}{tz + 1} \, d\mu(t).
\]
We have
\[
f(zH) + f(-zH) = 2\left(a + z^2 \sum \int z^{2i} t^{2i} \, d\mu(t)\right).
\]
Note that this shows that $|a_n| = \int |t|^{n-2}  \, d\mu(t)\leq \int  \, d\mu(t)  = a_2 $ for $n\geq 2.$
This shows that
\[
\norm{D^2f(0)[H]} \leq \norm{f}_{B_\R(0, \ep)}.
\]
Therefore $\norm{D^2f(0)} \leq \frac{1}{\ep^2}\norm{f}_{B_\R(0, \ep)}$.
Denote $M = \norm{f}_{B_\R(0, \ep)}$. Now consider $\abs{f(zH) - a}$. This is bounded by $2M$. Therefore
\[
\norm{bz} - \norm{z^2 \sum \int z^i t^i \, d\mu(t)} \leq 2M 
\]
which gives
\begin{align*}
\norm{b} &\leq \norm{z \sum \int z^i t^i \, d\mu(t)} + \frac{2M}{\abs{z}} \\
&\leq \norm{z \sum \int z^i \, d\mu(t)} + \frac{2M}{\abs{z}} \\
&\leq \abs{\frac{z}{1-z}}\norm{D^2f(0)[H]} + \frac{2M}{\abs{z}} \\
&\leq \abs{\frac{z}{1-z}}M + \frac{2M}{\abs{z}} \\
\end{align*}
Pick $z = \frac{1}{2}$. Then 
\[
\norm{Df(0)[H]} = \norm{b} \leq 5M.
\]
Therefore,
\[
\norm{Df(0)} \leq \frac{1}{\ep} 5M.
\]
A control function $\gamma$ is given by
\[
\gamma(X,f) = \norm{f(X)} + \norm{Df(X)} + \norm{D^2f(X)}.
\]
\end{proof}

We note that any matrix convex function on a finite dimensional space will be continuous and thus locally bounded. Some sort of topological restriction, such as local boundedness, is necessary, as arbitrary linear maps on any operator system are not necessarily bounded but are definitely convex, as all linear functions are convex.

\begin{lemma}\label{realeachlevel}
Any function in a sovereign class is real analytic at each level on each finite dimensional affine subspace containing the identity direction.
\end{lemma}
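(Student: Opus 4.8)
The plan is to reduce the statement to a single-variable analyticity statement to which the quantitative wedge-of-the-edge corollary applies. Fix a level $n$, a point $X_0$ in the relevant affine subspace, and work near $X_0$; by the \textbf{Locality} axiom for the dominion and \textbf{Closure under localization} for the class, we may assume $B_\R(X_0,\ep)$ is contained in the domain and, after a translation (using \textbf{Translation invariance} and the fact that the identity direction is in the subspace), that $X_0 = 0$ and that the positive contractions are a convenient test set. The object to analyze is $f$ restricted to the affine span through $0$ of directions $H$; because the identity direction is included, every such $H$ near $0$ can be written as a difference $H = H_+ - H_-$ of small positive elements, so it suffices to understand $f$ along the ``positive orthant'' emanating from $0$.

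The heart of the argument is to produce the homogeneous pieces. Along any positive direction $H \geq 0$ in $B_\R(0,\ep)$, the segment from $-H$ to $H$ lies in the domain, and \textbf{One-variable knowledge} says $t \mapsto f_{\overline{(-H)(H)}}(t) = f(tH)$ analytically continues to $\D$; expand $f(tH) = \sum_n a_n(H) t^n$. The \textbf{Control} axiom gives $\|a_n(H)\| \leq \gamma(0,f)\, e(\|2H\|)$, and part (1) of Control bounds $\gamma(0,f)$ (more precisely $\inf_{X \in B_\R(0,\ep)_1}\gamma(X,f)$, which after a further shrinking of the ball we may take to be realized near $0$) by $c(\ep)\|f\|_{B_\R(0,\ep)_1}$. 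The map $H \mapsto a_n(H)$ is, by uniqueness of Taylor coefficients together with the free/affine structure of $f$ on the finite-dimensional slice, the restriction of a homogeneous polynomial of degree $n$ in $H$; one checks this by noting $a_n(H) = \frac{1}{n!}D^n f(0)[H]$ on the slice and that $D^nf(0)[\cdot]$ is polynomial of degree $n$ on a finite-dimensional space where $f$ is locally bounded and measurable (hence, by the classical fact that a measurable function restricting to a polynomial on every line is a polynomial, genuinely polynomial). Thus $h_n := a_n$ is a generalized homogeneous polynomial of degree $n$ on the finite-dimensional slice, and $\sum_n \|h_n(H)\|$ is dominated, uniformly for $H$ in the positive-contraction set scaled into $B_\R(0,\ep)$, by a constant times $\|f\|_{B_\R(0,\ep)_1}$ via the coefficient estimates and the summability built into the continuation to $\D$.

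Now apply the \textbf{quantitative wedge-of-the-edge theorem} (the Corollary) to the sequence $(h_n)$: it yields a $\delta>0$ and the conclusion that $\sum_n h_n(Z)$ defines an analytic function on the complex ball $B_\C(0,\delta)$, bounded there. This analytic function agrees with $f$ on the real points of that ball, since on each real positive direction it reproduces the convergent Taylor series $\sum a_n(H)t^n = f(tH)$, and real directions near $0$ are spanned by positive ones. Hence $f$ coincides near $0$ on the slice with a convergent power series, i.e.\ $f$ is real analytic at $0$ at level $n$ on that affine subspace. Since $X_0$ was arbitrary, we get real analyticity at every level on every finite-dimensional affine subspace containing the identity direction.

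The main obstacle I anticipate is the uniformity needed to feed the wedge-of-the-edge corollary: one must get the estimate $\sum_n \|h_n(H)\| \leq 1$ (after normalization) \emph{simultaneously} for all $H$ in the full positive-contraction test set, not just along a single line, and this requires combining the line-by-line Control estimate with the fact that $\gamma(0,f)$ is controlled by $\|f\|$ on a fixed ball --- which is exactly why Control part (1) is phrased with a universal $c(\ep)$. A secondary technical point is the measurable-plus-polynomial-on-lines argument needed to promote the formally-defined coefficient functions $a_n$ to honest homogeneous polynomials, so that ``generalized homogeneous polynomial'' in the Corollary genuinely applies; this uses \textbf{Local boundedness} (measurability on finite-dimensional affine slices) in an essential way.
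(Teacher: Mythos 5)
Your outline shares the paper's broad strategy (reduce to a finite-dimensional slice, exploit one-variable knowledge along positive directions, feed the resulting coefficient bounds into the quantitative wedge-of-the-edge corollary), but it is missing the step that makes the argument actually work: mollification. The paper first convolves $f$ with a compactly supported smooth bump $\ph_\alpha$, obtaining $f_\alpha = \ph_\alpha * f$, and then uses \textbf{closure under convolution} to conclude that $f_\alpha$ is still in the sovereign class on a slightly smaller domain. Because $f_\alpha$ is $C^\infty$, the expansion $f_\alpha(Y+Z)=\sum h_d(Z)$ exists with $h_d = \tfrac{1}{d!}D^d f_\alpha(Y)$ genuine homogeneous polynomials, and the control axioms applied to $f_\alpha$ give the uniform coefficient bounds needed for wedge-of-the-edge. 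Finally, a normal families argument (the $f_\alpha$ are a uniformly bounded family of analytic functions on a fixed complex ball converging pointwise to $f$ on the real slice) recovers analyticity of $f$ itself.

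The gap in your proposal is precisely where you promote the coefficient functions $a_n(H)$ to homogeneous polynomials. You write $a_n(H)=\tfrac{1}{n!}D^nf(0)[H]$, but this identity presupposes that $f$ is $n$ times differentiable at $0$, which is not available: the hypotheses only give local boundedness, measurability on finite-dimensional affine slices, and analyticity of $t \mapsto f(tH)$ along individual lines. There exist bounded measurable functions on $\R^2$ that are analytic along every line through the origin and yet are not even $C^1$ there, and for such functions the line-by-line coefficient functions do not assemble into polynomials. The classical ``measurable $+$ polynomial on every line $\Rightarrow$ polynomial'' theorem you cite does not rescue this: you would need to know the $a_n$ themselves restrict to polynomials on every line, and you have no independent access to that. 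The mollification step is not a technical convenience but the load-bearing idea; without it, $\sum h_n(Z)$ is not even a well-defined series of generalized homogeneous polynomials, so the wedge-of-the-edge corollary cannot be invoked, and the ``agrees with $f$ on real points'' conclusion has nothing to agree with. Also note your argument drops the normal families step entirely, which is needed to pass from analyticity of the smooth approximants back to $f$.
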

\begin{proof}
Without loss of generality, we will assume $R_1$ is finite dimensional. Fix $X \in G_n$. Without loss of generality, $0 = X \in G_1$ by closure under coordinatization and translation. Also without loss of generality, assume that $B_\R(X, 2) \subset G$. Let $\ph$ be a compactly supported positive smooth function on $R_1$. Define $\ph_\alpha(x) = \frac{1}{\alpha}\ph\left(\frac{1}{\alpha}x\right)$. Consider 
\[
f_\alpha(Y) = (\ph_\alpha * f)(Y) = \int_{R_1} f(Y - r)\ph_\alpha(r).
\]
As a sovereign class of functions is closed under convolution, for small enough $\alpha$, the function $f_\alpha$
will be in the sovereign class of $B_\R(X,2-\ep)$ for any fixed $\ep.$
Choose $Y \in B_\R(X, \delta/2)_1$ such that $\gamma(Y,f) \leq 2c(\delta)\|f_\alpha\|_{B_{\R}(X,\delta)_1}$ (which exists by the definition of our control function).
where $\delta<1$ comes from the quantitative wedge-of-the-edge theorem.
Note that $f_\alpha|_{B_\R(X,2-\ep)_1}$ is smooth at $Y$ and by the one variable knowledge $f_\alpha(Y+Z) = \sum h_d(Z)$
on positive contractions in $R_1$. By the control properties, we see that $\sum \|h_d(Z)\|$ is bounded by some $M$ on the positive contractions as we have uniform bounds on the Taylor coefficients, and therefore by the quantitative wedge-of-the-edge theorem,
$f_\alpha$ continues to a function bounded by $M\ep$ on $B_{\C}(Y,\delta)_1.$ Therefore, $f$ extends analytically and is bounded by $M\ep$ on $B_{\C}(Y,\delta)_1$ by a normal families argument.
As $B_\C(X, \delta/2)_1 \subseteq B_{\C}(Y,\delta)_1,$ we are done.
\end{proof}

Let $G \subset \mathcal{S}(R_1)$ be a real domain. Let $f: G \to \M(R_2)$. Fix $X \in G_n$. $f$ is \dfn{real analytic at $X$} if there is a $\delta > 0$ such that for any choice of $H_i$, the induced free function $f(X + \sum H_i t_i) = \sum a_\alpha t^\alpha$ for all $\norm{\sum H_i t_i } < \delta$. Equivalently, $f^{(n)}(X + Y) = \sum h_j(Y)$ is uniformly convergent on $B_\C(X, \delta)$ for noncommutative generalized homogeneous polynomials $h_j$. 

We adopt the (by now standard) Helton convention of suppressing tensor notation for products of operators $A$ and noncommutative indeterminants $x_i$; that is, we write $Ax_i$ for $A \otimes x_i$.

\begin{theorem}[The royal road theorem]\label{realsov}
	Any function in a sovereign class is real analytic. 
\end{theorem}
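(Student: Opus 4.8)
The plan is to upgrade the slice-wise real analyticity of Lemma~\ref{realeachlevel} to genuine real analyticity at every point by controlling \emph{all} directions simultaneously, not merely the positive-semidefinite ones. Fix $X \in G_n$; by closure under coordinatization and translation we may assume $X = 0 \in G_1$ and, after rescaling (using scale invariance of the dominion), that $B_\R(0,2) \subseteq G$. The key structural point is that a self-adjoint perturbation $Z$ decomposes as $Z = Z_+ - Z_-$ with $Z_\pm \geq 0$, so the genuinely two-sided real-analytic behavior near $0$ can be recovered from the one-sided (positive-direction) Taylor data that the control function $\gamma$ already bounds. Concretely, I would introduce an auxiliary function of two positive parameters $g(s,t) = f(sH_1 - tH_2)$ for $H_1,H_2 \geq 0$ positive contractions, show it extends to a bounded analytic function on a bidisk by applying the quantitative wedge-of-the-edge corollary in each variable separately, and conclude that $f$ extends analytically to a full complex ball $B_\C(0,\delta)$ with a norm bound independent of the directions chosen.

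The steps, in order: (1) mollify $f$ by convolution as in Lemma~\ref{realeachlevel} to get $f_\alpha$ in the sovereign class on a slightly smaller domain, pick a base point $Y$ near $0$ where $\gamma(Y,f_\alpha) \leq 2c(\delta)\|f_\alpha\|$, and record via the control axiom that the Taylor coefficients of every slice $t \mapsto f_\alpha(Y + tH)$ are bounded uniformly by $\gamma(Y,f_\alpha) \, e(2)$ for $\|H\| \leq 1$; (2) assemble these slice expansions into generalized homogeneous polynomials $h_d$ so that $f_\alpha(Y+Z) = \sum h_d(Z)$ with $\sum \|h_d(Z)\|$ bounded by some $M$ on positive contractions $Z$ — this uses that a generalized homogeneous polynomial is determined by its restriction to one-dimensional slices together with the polarization identity; (3) apply the quantitative wedge-of-the-edge corollary to the sequence $h_d$ to get analytic continuation of $f_\alpha$, bounded by $M\eps$, on $B_\C(Y,\delta)_1$, and by a normal-families argument push this to $f$ itself and to every level via coordinatization-invariance of $\gamma$; (4) finally observe $B_\C(0,\delta/2) \subseteq B_\C(Y,\delta)$, so the expansion $f^{(n)}(0 + Z) = \sum h_j(Z)$ converges uniformly on a genuine complex ball, which is exactly the definition of real analyticity at $X$.

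The main obstacle I anticipate is step (2): verifying that the \emph{full} multivariable Taylor series (in all the noncommuting directions, including mixed terms) is controlled, given that the control axiom only directly bounds coefficients of \emph{single} one-dimensional slices $f_{\cc{AB}}$. The resolution should be that a noncommutative generalized homogeneous polynomial of degree $d$ is recovered from its values on positive directions by polarization — writing a mixed monomial in $H_1,\dots,H_k$ as a signed combination of pure powers $(c_1 H_1 + \cdots + c_k H_k)^d$ evaluated along slices, all of which stay in the cone of positive contractions after suitable scaling — so the uniform slice bound from the control function does propagate to a uniform bound on $\sum\|h_d(Z)\|$ over the positive contraction ball, with the polarization costing only a dimension-free geometric factor that gets absorbed into the constant $C$ of the wedge-of-the-edge estimate. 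The secondary subtlety is the normal-families argument transferring the bound from the mollifications $f_\alpha$ back to $f$: one needs the continuations to be locally uniformly bounded independent of $\alpha$, which follows since $\|f_\alpha\|_{B_\R(0,\delta)} \leq \|f\|_{B_\R(0,2)}$ for small $\alpha$, and then extracts a convergent subsequence whose limit agrees with $f$ on the real points by local boundedness and measurability of $f$.
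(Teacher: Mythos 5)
Your outline steps (1)--(4) essentially reconstruct the paper's route: mollify, use one-variable knowledge and the control axiom to get uniform Taylor-coefficient bounds on positively oriented slices, assemble the homogeneous expansion, and invoke the quantitative wedge-of-the-edge corollary. The paper's proof of Theorem~\ref{realsov} itself is short: it cites Lemma~\ref{realeachlevel} as a black box to obtain the existence of the noncommutative homogeneous expansion $f(X+Z)=\sum h_d(Z)$, then uses the invariance axioms $\gamma(X,f)=\gamma(X^{\oplus N},f)=\gamma(X^{(n)},f^{(n)})$ together with control property (2) to see that $\sum\|h_d(Z)\|$ is bounded on positive contractions \emph{uniformly across all levels and all finite-dimensional slices}, and then applies the noncommutative quantitative wedge-of-the-edge corollary once. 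You do mention ``coordinatization-invariance of $\gamma$'' in passing, which is the actual crux of the upgrade from the lemma to the theorem (uniformity over levels), so the structure is right.

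However, two of your embellishments are misdirected and would cause trouble if you actually tried to carry them out. First, the auxiliary function $g(s,t)=f(sH_1-tH_2)$ and ``applying the wedge-of-the-edge corollary in each variable separately'' duplicates work the corollary already does internally: the proof of the quantitative wedge-of-the-edge lemma writes $Z=A-B+iC-iD$ with $A,B,C,D\geq 0$ and handles the negative and imaginary directions by Lagrange interpolation, so the two-sided extension from positive-direction data is built in and a separate bidisk argument is both unnecessary and awkward to set up (the corollary is stated for free functions, not commuting scalar families like $g(s,t)$). Second, and more importantly, the ``main obstacle'' you identify in step (2) --- controlling mixed terms via a polarization identity writing mixed monomials as signed combinations of pure powers $(c_1H_1+\cdots+c_kH_k)^d$ --- is a red herring. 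On the one hand it is not needed: to apply the wedge-of-the-edge corollary you only need $\sum\|h_d(Z)\|$ bounded for \emph{positive contractions} $Z$, and for such $Z$ the term $h_d(Z)$ is precisely the $d$-th Taylor coefficient of the single slice $t\mapsto f(Y+tZ)$ (take $A=Y-Z$, $B=Y+Z$ in $f_{\cc{AB}}$), so control property (2) bounds it directly with no mixing. On the other hand the polarization as you describe it would not go through: signed combinations $\sum c_iH_i$ with some $c_i<0$ leave the positive cone, and there is no noncommutative analogue of the classical polarization identity reducing a mixed word like $H_1H_2H_1$ to pure powers, so ``after suitable scaling'' does not rescue the argument. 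In short: drop the bidisk and polarization; the real content is that the direct-sum and coordinatization invariance of $\gamma$ make the slice bounds from Lemma~\ref{realeachlevel} uniform across levels, at which point the noncommutative quantitative wedge-of-the-edge corollary finishes the proof.
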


\begin{proof}
Fix $X \in G_n$. Without loss of generality, $0 = X \in G_1$ by closure under coordinatization and translation. Also without loss of generality, assume that $B_\R(X, 1) \subset G$. 
Therefore, since $f$ is real analytic at each level by Lemma \ref{realeachlevel},
$f(X) = \sum h_d(X)$ for some noncommutative homogenous generalized polynomials $h_d$ on the set of positive contractions
in $\mathcal{S}(R_1).$ Moreover, the series is bounded on smaller balls by the control properties, as we have uniform bounds on the Taylor coefficients on each positively oriented one dimensional slice. Thus, by the noncommutative quantitative wedge-of-the-edge theorem, the function $f$ must be bounded and analytic on $B_\C(X, \delta)$ for some $\delta >0$. This establishes the claim.
\end{proof}

\section{Realizations and the Kraus theorem}
In the following section, we will usually assume that $R_1 = \R^d$ and always that $R_2$ is contained in some concrete
$\BK.$ We will frequently use free noncommutative  power series of the form
\[f(Z) = \sum_{\alpha} c_\alpha Z^\alpha,\]
where $\alpha$ runs over all words in the formal noncommuting letters $x_1, \ldots, x_d,$ where the empty word will be denoted by $1.$ (Words are the natural multi-indices in the noncommutative setting.)
Various series representations can be derived via model-realization theory \cite{vvw12,bgm05,bgm06,agmc_gh, bmv18} with many results for the homogenous expansion.
% The most elementary argument for absolute and uniform convergence, which works on the row ball, a subset of the ball in $\mathcal{M}(\R)^d$ relies on defining the polarized shifts $M_{x_i}\alpha = \frac{|c_\alpha|}{c_{\alpha}}x_i\alpha$ (where an improper quotient is evaluated to be $1$) as an operator on the Fock space (the Hilbert space on noncommutative power series with $\ell^2$ coefficients)  and evaluating  $f(tM_x)$ for $t<1.$

\subsection{Monotonicity}

The following lemma is essentially \cite[Theorem 4.16]{pastdfree} lifted to the multi-dimensional output setting. 

\begin{lemma}
Suppose that $f(X) = \sum c_\alpha X^\alpha$ is analytic on $B_\C(0,1)\subseteq \mathcal{S}(\R^d)$ and that $f$ is matrix monotone. For each $i = 1,\ldots, d$,
the $x_k$-localizing matrices (with operator entries) satisfy 
\[
C_i = \left[c_{\beta\ad x_i \alpha}\right]_{\alpha, \beta} \geq 0
\]
where $\alpha, \beta$ range over all monomials.
\end{lemma}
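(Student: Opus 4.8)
\emph{Plan.} Following the one-output case \cite[Theorem 4.16]{pastdfree}, I would use only the soft consequence of matrix monotonicity that the derivative of $f$ in a positive direction is positive, but exploit the freedom to feed $f$ self-adjoint matrix arguments $X$ of unbounded size.

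\textbf{Step 1 (differential monotonicity in coordinates).} If $X$ lies in the domain and $H\in\mathcal{S}(\R^d)_n$ has $H\geq 0$ (i.e. each coordinate $H_j\geq 0$), then $X\leq X+tH$ for small $t>0$, so $f(X)\leq f(X+tH)$ and hence $Df(X)[H]=\lim_{t\to 0^+}t^{-1}(f(X+tH)-f(X))\geq 0$. Differentiating $\sum_\alpha c_\alpha\otimes X^\alpha$ term by term in the direction $H=(0,\dots,0,\zeta\zeta^*,0,\dots,0)$, $\zeta\in\C^n$, with $\zeta\zeta^*$ in the $i$-th slot, and using $X=X^*$, this becomes
\[
0\ \leq\ Df(X)[H]\ =\ \sum_{\alpha,\beta} c_{\beta^* x_i\alpha}\otimes (X^\beta)^*\,\zeta\zeta^*\,X^\alpha \quad\text{in }\ \mathcal{B}(\mathcal{K})\otimes M_n,
\]
$\alpha,\beta$ ranging over all words, valid for every self-adjoint tuple $X$ of every size with $\|X\|$ small. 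Testing against $\Xi=\sum_j\mu_j\otimes\nu_j\in\mathcal{K}\otimes\C^n$ and using rank-one-ness of $\zeta\zeta^*$ collapses this to
\[
0\ \leq\ \sum_{\alpha,\beta}\big\langle c_{\beta^* x_i\alpha}\,\tilde\eta_\alpha,\ \tilde\eta_\beta\big\rangle,\qquad \tilde\eta_\alpha:=\textstyle\sum_j\langle X^\alpha\nu_j,\zeta\rangle\,\mu_j.
\]

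\textbf{Step 2 (realizing arbitrary test data).} Fix $N$ and vectors $\eta_\alpha\in\mathcal{K}$ for $|\alpha|\leq N$. I would choose $X$ (self-adjoint, of large size, of small norm), $\zeta$, and the $\mu_j,\nu_j$ so that $\tilde\eta_\alpha=\eta_\alpha$ for all $|\alpha|\leq N$; granting this, Step 1 gives $[c_{\beta^* x_i\alpha}]_{|\alpha|,|\beta|\leq N}\geq 0$, and letting $N\to\infty$ proves $C_i\geq 0$. The engine is that there exist a self-adjoint tuple $X$ and a vector $\zeta$ making $\{X^\gamma\zeta:|\gamma|\leq N\}$ linearly independent: the determinant of the matrix with these columns is a holomorphic polynomial in the entries of $(X,\zeta)$; it is not identically zero because the (non-self-adjoint) truncated left-creation operators on $\ell^2$ of the words of length $\leq N$ send $e_\emptyset$ to the basis $\{e_\gamma\}$; and a holomorphic polynomial that is somewhere nonzero cannot vanish on all self-adjoint tuples, since these span the space of all tuples over $\R$, hence over $\C$. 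Rescaling $X$ makes its norm small. Then $\nu\mapsto(\langle X^\alpha\nu,\zeta\rangle)_{|\alpha|\leq N}$ is onto, so choosing an orthonormal basis $f_1,\dots,f_p$ of $\mathrm{span}\{\eta_\alpha:|\alpha|\leq N\}$, writing $\eta_\alpha=\sum_r e_{\alpha,r}f_r$, and taking $\mu_r=f_r$ with $\nu_r$ a preimage of $(e_{\alpha,r})_{|\alpha|\leq N}$ yields $\tilde\eta_\alpha=\eta_\alpha$ for $|\alpha|\leq N$.

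\textbf{Step 3 (the tail) and the main obstacle.} One genuine wrinkle remains: $Df(X)[H]$ is an infinite series, so Step 1 really produces $\sum_{\alpha,\beta}\langle c_{\beta^* x_i\alpha}\tilde\eta_\alpha,\tilde\eta_\beta\rangle\geq 0$ over \emph{all} words, where $\tilde\eta_\alpha$ equals $\eta_\alpha$ only for $|\alpha|\leq N$ and is uncontrolled otherwise. I would kill the tail by a rescaling limit: with $\|X\|=\varepsilon\to 0$ the preimages $\nu_r$ grow like $\varepsilon^{-N}$ while $X^\alpha$ decays like $\varepsilon^{|\alpha|}$, so $\tilde\eta_\alpha\to 0$ whenever $|\alpha|>N$, and Cauchy estimates on the $c_\alpha$ (from analyticity on $B_\C(0,1)$) make the double-series tail converge uniformly in $\varepsilon$; the limit of the inequality is then exactly $\sum_{|\alpha|,|\beta|\leq N}\langle c_{\beta^* x_i\alpha}\eta_\alpha,\eta_\beta\rangle\geq 0$. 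I expect the main obstacle to be precisely this package — having enough self-adjoint matrix tuples to ``see'' the whole localizing matrix (linear independence of words evaluated on self-adjoint arguments) together with the uniform control needed to discard the tail — while Step 1 and all the index bookkeeping are routine. For operator-valued $f$ one may alternatively first compress $\mathcal{K}$ onto the finite-dimensional span of the $\eta_\alpha$, reducing to the matrix- (indeed scalar-) valued case before running the argument.
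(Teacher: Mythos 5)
Your proposal is correct and follows the same essential strategy as the paper: differentiate $f$ in a rank-one positive direction in the $i$-th coordinate, observe that monotonicity forces $Df(X)[H]\geq 0$, and then argue that the resulting quadratic-form inequality $\sum_{\alpha,\beta}\langle c_{\beta^*x_i\alpha}\tilde\eta_\alpha,\tilde\eta_\beta\rangle\geq 0$ holds for a rich enough family of test sequences $(\tilde\eta_\alpha)$ to imply positivity of all finite truncations of $C_i$. Where you diverge from the paper is in how you establish that richness. The paper packages the step as a density claim about the range of $K_X^v(w)=(I_\KK\otimes(v^*X^\alpha)_\alpha)w$, defers the density itself to a reproducing-kernel-type argument in \cite[Proposition 3.9]{pastdfree}, and gets closure of the range under addition from the direct-sum identity $K_{X_1}^{v_1}(w_1)+K_{X_2}^{v_2}(w_2)=K_{X_1\oplus X_2}^{v_1\oplus v_2}(w_1\oplus w_2)$. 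You instead construct the test data directly: a Zariski-density argument (the determinant polynomial is nonzero at the truncated creation operators, and a polynomial vanishing on the real form of self-adjoint tuples vanishes identically) produces a self-adjoint $X$ and $\zeta$ with $\{X^\gamma\zeta:|\gamma|\leq N\}$ independent, and then a choice of $\mu_r,\nu_r$ realizes any prescribed finite family $(\eta_\alpha)_{|\alpha|\leq N}$. You also handle, explicitly, a point the paper leaves implicit in the reproducing-kernel framework: the evaluation vectors $K_X^v(w)$ are supported on all words, so passing from these to the finitely supported test vectors that define ``$C_i\geq 0$'' requires control of the tail $\sum_{|\alpha|>N \text{ or } |\beta|>N}$. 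Your rescaling limit $\|X\|=\varepsilon\to 0$ — under which $\tilde\eta_\alpha$ is pinned for $|\alpha|\leq N$ but decays like $\varepsilon^{|\alpha|-N}$ beyond, with the Cauchy bound $\|c_\gamma\|\lesssim 1$ and $\varepsilon<1/d$ ensuring the double tail goes to zero — is a genuine and worthwhile elaboration of what the cited density result is really delivering. Net: the same proof skeleton, but your version is more self-contained (it avoids the external citation and the direct-sum closure trick) and is more careful about the summability issue, at the cost of more bookkeeping.
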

\begin{proof}
	Note
\[
		Df(X)[H] = \sum_{\alpha,\beta, i} c_{\beta^*x_i\alpha} X^{\beta^*}H_iX^\alpha.
\]
	We can write 
		\[Df(X)[H] = \sum_i (I_\KK \otimes K_X)^*(C_i \otimes H_i) (I_\KK \otimes K_X)\]
	where $K_X$ is the vector-valued free function $(X^\alpha)_\alpha$.
	Taking $H_i = vv^*,$ and the rest zero then defining a vector-valued function $K_X^v(w)= (I_\KK \otimes (v^*X^\alpha)_\alpha )w$, we see, by monotonicity, that
	 	$K_X^v(w)^* C_i K_X^v(w)\geq 0.$
	So it suffices to show that the range of $K_X^v(w)=(I_\h \otimes (v^*X^\alpha)_\alpha) w$ is dense.
	It is an elementary exercise to show that their span is dense, say by viewing the ambient setting
	as a kind of reproducing kernel Hilbert space. 
	(See, for example, \cite[Proposition 3.9]{pastdfree}.)
	Therefore, it is sufficient to show that the range is closed under taking sums.
	One checks that 
	\[K_{X_1}^{v_1}(w_1) + K_{X_2}^{v_2}(w_2) = K_{X_1\oplus X_2}^{v_1 \oplus v_2}(w_1 \oplus w_2).\]
	So, we are done.

\end{proof}

\begin{theorem}\label{monoreal}
Let $f$ be a matrix monotone function whose power series conveges absolutely and
 uniformly on $B_\C(0, 1+\ep)\subseteq \mathcal{S}(\R^d)$. Let $\h_i$ be the Hilbert space equipped with the inner product 
\[
\ip{\alpha\otimes v}{\beta\otimes w}_{\h_i} = w^*c_{\beta\ad x_i \alpha} v.
\]
Let $\h = \oplus \h_i$ and $P_i$ be the projection onto $\h_i$. Note that 
\[
\ip{\alpha\otimes v}{ \beta \otimes w}_\h = \sum_i w^*c_{\beta\ad x_i \alpha}v.
\]
Define $A: \h \to \h$ by
\[
A(\alpha\otimes v) = \sum_i (x_i \alpha)\otimes w.
\]
Let Q be the map taking $k\in \KK$ to $1 \otimes k \in \h$.
The operator $A$ is a bounded self-adjoint contraction on $\h$, and
 
\[
f(Z) = a_0 + Q^*(A - \sum_i P_i Z_i\inv)\inv Q.
\]
\end{theorem}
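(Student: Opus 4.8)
The plan is to construct $\h$, $A$, and $Q$ exactly as prescribed, to verify that $A$ is a bounded self-adjoint contraction, and then to establish the resolvent formula by expanding $(A-\sum_iP_iZ_i\inv)\inv$ as a noncommutative power series and matching it coefficient-by-coefficient against $\sum_\alpha c_\alpha Z^\alpha$. By the preceding Lemma, each localizing matrix $C_i=[c_{\beta\ad x_i\alpha}]_{\alpha,\beta}$ is positive semidefinite with operator entries, so $\ip{\alpha\otimes v}{\beta\otimes w}_{\h_i}=w\ad c_{\beta\ad x_i\alpha}v$ is a genuine positive semidefinite sesquilinear form on the algebraic span of the symbols $\alpha\otimes v$ ($\alpha$ a word, $v\in\KK$); quotienting by its kernel and completing gives $\h_i$, and then $\h=\oplus_i\h_i$ with the coordinate projections $P_i$. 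The hypothesis that the series converges absolutely and uniformly on $B_\C(0,1+\ep)$ gives a geometric bound $\norm{c_\gamma}\le M(1+\ep)^{-|\gamma|}$; this is exactly what we need to make the formal shift $\alpha\otimes v\mapsto\sum_i(x_i\alpha)\otimes v$ descend to a bounded operator $A$ on $\h$ (it pushes a symbol from the $x_i$-weighted level down to the geometrically smaller $x_jx_i$-weighted level), and to make the Neumann series below converge on a neighbourhood of $0$.

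Next come the three properties of $A$. Self-adjointness is a short computation: $\ip{A(\alpha\otimes v)}{\beta\otimes w}_\h$ and $\ip{\alpha\otimes v}{A(\beta\otimes w)}_\h$ both evaluate to $w\ad c_{\beta\ad x_\ell x_j\alpha}v$, using only that a one-letter word is its own reverse and the Hermitian symmetry $c_{\gamma\ad}=c_\gamma\ad$ of the coefficients (valid because $f$ is self-adjoint). Boundedness is the geometric-decay remark just made. Contractivity $\norm A\le1$ is the one substantive point: it is equivalent to the positivity of an auxiliary, ``doubly localized'' form, and this is where matrix monotonicity is genuinely spent, beyond the positivity $C_i\ge0$ already used. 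I would obtain it either directly, by a lurking-isometry argument off the Agler-type kernel decomposition $Df(X)[H]=\sum_i(I\otimes K_X)\ad(C_i\otimes H_i)(I\otimes K_X)$ from the previous proof, or --- more economically --- by transporting the analytic continuation of $f$ to $\Pi(\R^d)$ as a map into $\cc{\Pi(R_2)}$ (available from Theorem \ref{newlow}) through a Cayley transform into a Schur-class free function on the Cayley transform of $\Pi(\R^d)$, and reading $A$ off as the main operator of a (co)isometric transfer-function colligation, which is a contraction by construction.

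For the realization identity, put $\hat Z=\sum_i P_i\otimes Z_i$, so $\sum_iP_iZ_i\inv=\hat Z\inv$. For $Z$ near $0$ one has $(\hat Z\inv-A)\inv=\sum_{n\ge0}(\hat ZA)^n\hat Z$, a series in nonnegative powers of the $Z_i$ alone, so the right-hand side extends analytically across the loci where individual $Z_i$ are singular and in particular is analytic at $0$. Unwinding the definition of $Q$ (the diagonal embedding $k\mapsto\oplus_i(1\otimes k)$), of $\hat Z$, and of the shift $A$, and using the forms on the $\h_i$, a direct computation identifies $Q\ad(\hat ZA)^n\hat Z Q$ with the degree-$(n+1)$ homogeneous part $\sum_{|\gamma|=n+1}c_\gamma Z^\gamma$ of $f$; summing over $n$ produces $f(Z)-a_0$, and since both sides are analytic and agree near $0$ they agree on the common domain of analyticity. (Reading the resolvent in the opposite orientation, $\sum_iP_iZ_i\inv-A$, is what lines the signs up with the displayed formula.)

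The step I expect to be the real obstacle is the contractivity of $A$: positive semidefiniteness of the single localizing matrices makes $A$ merely well defined and bounded, and forcing $\norm A\le1$ requires the full strength of matrix monotonicity on a convex domain --- packaged either as a lurking isometry off the kernel decomposition of $Df$, or, via Theorem \ref{newlow}, through a Cayley transform and free transfer-function realization theory.
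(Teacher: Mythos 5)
Your construction of the Hilbert spaces, the self-adjointness computation for $A$, and the resolvent expansion matching $\sum c_\alpha Z^\alpha$ are all essentially what the paper does. Where you go astray is the part you flag as ``the real obstacle'': the contractivity of $A$. You claim that $C_i \geq 0$ gives only well-definedness and boundedness of $A$, and that $\|A\| \leq 1$ requires ``the full strength of matrix monotonicity on a convex domain'' via a lurking-isometry or Cayley-transform argument. The paper shows this is not so. Once one has the direct computation that $A$ is formally symmetric on the dense span of the $\alpha \otimes v$, boundedness \emph{and} contractivity follow together from a spectral-radius argument that uses only the uniform absolute convergence of the power series, not any further positivity: one computes
\[
\norm{A^n(\alpha\otimes v)}^2 = \sum_{\abs{\omega}=2n+1} v\ad c_{\alpha\ad \omega \alpha} v \leq \sum_{\omega} \abs{v\ad c_\omega v},
\]
a constant independent of $n$, and then $\norm{A} = \rho(A) = \sup_{\alpha,v} \liminf_n \norm{A^n(\alpha\otimes v)}^{1/n} \leq 1$ (equivalently, iterating $\norm{A\xi}^2 = \ip{A^2\xi}{\xi} \leq \norm{A^2\xi}\norm{\xi}$ gives $\norm{A\xi}\leq\norm{\xi}$ directly). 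So the positivity $C_i \geq 0$ together with summability already encodes everything; there is no residual constraint to extract from monotonicity.

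Moreover, one of your two proposed alternative routes is circular: you suggest pushing the analytic continuation ``available from Theorem~\ref{newlow}'' through a Cayley transform, but the paper's proof of Theorem~\ref{newlow} explicitly invokes Theorem~\ref{monoreal} via the realization formula, so you cannot use the former to establish the latter. Your other suggestion, a lurking-isometry argument off the kernel decomposition of $Df$, is plausible and in the spirit of the Helton--McCullough--Vinnikov butterfly realization for rational functions, but it is considerably heavier machinery than the self-adjointness-plus-spectral-radius shortcut that the paper actually uses. You should also note that your boundedness claim (``geometric decay of $\norm{c_\gamma}$ makes the shift bounded'') is incomplete on its own, since it is not obvious how to compare $\norm{A(\alpha\otimes v)}_{\h}$ with $\norm{\alpha\otimes v}_{\h}$ directly from coefficient decay; the spectral-radius argument is what actually delivers this too.
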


\begin{proof}

To see that $A$ is self-adjoint, compute
\begin{align*}
\ip{A (\alpha\otimes v)}{\beta\otimes w} &= \ip{\sum_i x_i \alpha\otimes v}{\beta\otimes w} \\
&= \sum_i \ip{x_i \alpha\otimes v}{\beta\otimes w} \\
&= \sum_i \sum_j w^*c_{\beta\ad x_j x_i \alpha}v \\
&= \sum_j \sum_i w^*c_{\beta\ad x_j x_i \alpha}v \\
&= \sum_j \ip{\alpha\otimes v}{x_j \beta\otimes w} \\
&= \ip{\alpha\otimes v}{\sum_j x_j \beta\otimes w} \\
&= \ip{\alpha\otimes v}{A \beta\otimes w}.
\end{align*}
To see that $A$ is contractive, we will use the fact that 
\[
\norm{A} = \rho(A) = \sup_{\norm{v}=1} \sup_\alpha \liminf_{n \to \infty} \norm{A^n \alpha \otimes v}^{1/n}.
\]
Write
\begin{align*}
\norm{A^n (\alpha\otimes v)}^2 &= \ip{A^n (\alpha\otimes v)}{A^n (\alpha\otimes v)} \\
&=\ip{(\sum x_i)^n \alpha\otimes v}{(\sum x_i)^n \alpha\otimes v} \\
&= \sum_{\abs{\omega} = 2n+1} v^*c_{\alpha\ad \omega \alpha}v \\
&\leq \sum_\omega \abs{v^*c_\omega v}.
\end{align*}
The power series converges uniformly and absolutely on the ball of radius 1, and thus the coefficients are uniformly bounded. This implies that $\rho(A) \leq 1$.

We will now establish that $A P_j (\alpha\otimes v) = x_i \alpha \otimes v$.
\begin{align*}
\ip{A P_j (\alpha\otimes v)}{\beta\otimes w} &= \ip{P_j \alpha \otimes v}{A \beta \otimes w} \\
&= \ip{P_j (\alpha\otimes v)}{ \sum_i x_i \beta\otimes w} \\
&= \sum_i \ip{P_j (\alpha\otimes v)}{x_i \beta\otimes w} \\
&= \sum_i w^*c_{\beta\ad x_i x_j \alpha}v \\
&= \ip{x_j \alpha \otimes v}{\beta\otimes w}.
\end{align*} 

We now compute the realization to see that it agrees with $f$.
\begin{align*}
w^*f(Z)v &= \sum_\alpha w^*c_{\alpha}v Z^\alpha \\
&= w^*c_1v + \sum_i \sum_{\alpha} w^*c_{x_i \alpha}v Z^{x_i \alpha} \\
&= w^*c_1v + \sum_i \sum_\alpha \ip{P_i (\alpha\otimes v)}{1\otimes w} Z^{x_i \alpha}\\
&= w^*c_1v + \sum_i \sum_\alpha \ip{P_i (AP)^\alpha (1\otimes v)}{1\otimes w} Z^{x_i \alpha}\\
&= w^*c_1v + \ip{(A - \sum_i P_i Z_i\inv)\inv (1\otimes v)}{(1\otimes w)}_{\h} \\
& = w^*c_1v + w^*Q^*(A - \sum_i P_i Z_i\inv)\inv Qv.
\end{align*}
\end{proof}
We note that, in general, noncommutative Pick functions have representations of the form 
$a_0 + E((A-Z^{-1})^{-1})$ whenever they are analytic on a neighborhood of $0$ and $R_1$ is a $C^*$-algebra,
where $E$ is a completely positive map \cite{will13,pastdcauchy}. The theory of such ``Cauchy transforms'' is well understood in the context of free probability \cite{anw14,will15}.

\subsection{Convexity}

\begin{lemma}
Suppose that $f$ is analytic on $B_\C(0,1)\subseteq \mathcal{S}(\R^d)$ and that $f$ is matrix convex. 
The block matrix (with operator entries),
\[
C=\left[c_{\beta\ad\alpha}\right]_{\alpha,\beta} \geq 0
\]
where $\alpha, \beta$ range over all monomials of degree greater than or equal to $1$.
\end{lemma}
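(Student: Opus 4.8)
The plan is to mimic the proof of the analogous monotonicity lemma (the $x_i$-localizing matrix lemma above), extracting the relevant positivity from the second-order behavior of $f$ rather than the first-order behavior. Since $f$ is matrix convex and analytic on $B_\C(0,1)$, the function $g(t) = f(X + tH)$ is a scalar-on-operator convex function of the real parameter $t$ for each fixed self-adjoint $X$ with $\|X\|<1$ and each self-adjoint direction $H$, hence $g''(0) = D^2 f(X)[H] \geq 0$. The first step is therefore to write out $D^2 f(0)[H]$ as a noncommutative power series in $X$ evaluated at $0$: only the degree-two-in-$H$, degree-zero-in-$X$ part survives when we evaluate the Hessian at $X = 0$, and a direct differentiation of $\sum_\alpha c_\alpha (X+tH)^\alpha$ shows
\[
D^2 f(0)[H] = 2\sum_{i,j} c_{x_j x_i}\, H_i H_j = 2\sum_{\alpha,\beta:\,|\alpha|=|\beta|=1} c_{\beta\ad\alpha}\, H_{\beta\ad} H_\alpha,
\]
which already exhibits the $|\alpha|=|\beta|=1$ block of $C$ as positive semidefinite after testing against $H_i = v e_i w\ad$-type directions. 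To reach all monomials of degree $\geq 1$, I would instead evaluate the Hessian at a nonzero point $X$ and use the same bookkeeping as in the monotone case: writing $D^2 f(X)[H] = 2\sum_{\alpha,\beta,i,j} c_{\beta\ad x_i x_j \alpha} X^{\beta\ad} H_i X^{\cdots} H_j X^\alpha$ and regrouping, one packages $D^2 f(X)[H]$ (for a rank-one $H = vv\ad$ in a single coordinate, rest zero) into a sandwich $K_X^v(w)\ad\, C\, K_X^v(w)$ where $K_X^v$ is the vector-valued free function built from $(v\ad X^\alpha)_\alpha$ ranging over monomials of degree $\geq 1$, exactly as in the preceding lemma.

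Concretely, the key steps in order are: (1) differentiate the series twice and collect terms so that $D^2 f(X)[H]$ becomes a quadratic form in the entries of a vector indexed by monomials $\alpha$ with $|\alpha|\geq 1$, with coefficient matrix $C = [c_{\beta\ad\alpha}]$; (2) specialize $H$ to be supported in one coordinate and rank one, and invoke matrix convexity to conclude $K_X^v(w)\ad C K_X^v(w) \geq 0$ for all choices of $X$ (with $\|X\|<1$), $v$, $w$; (3) argue that the vectors $K_X^v(w)$ have dense span in the relevant Hilbert space — this is the reproducing-kernel-Hilbert-space observation cited as \cite[Proposition 3.9]{pastdfree}, namely that the span of $(v\ad X^\alpha)_\alpha$ over all $X, v$ separates/exhausts the index set; (4) upgrade "dense span" to "dense range" by noting the range is closed under sums via the direct-sum identity $K_{X_1}^{v_1}(w_1) \oplus \text{-trick}$: $K_{X_1\oplus X_2}^{v_1\oplus v_2}(w_1\oplus w_2) = K_{X_1}^{v_1}(w_1) + K_{X_2}^{v_2}(w_2)$, exactly as in the monotone lemma; (5) conclude $C \geq 0$ on the dense set, hence everywhere by continuity.

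The only genuine subtlety — and where I'd spend the most care — is in step (1)/(2): getting the second derivative into clean "localizing matrix" form. Unlike the first derivative, $D^2 f(X)[H]$ is a sum over \emph{ordered pairs} of insertion sites for the two copies of $H$, so the naive regrouping produces cross terms $X^{\beta\ad} H X^\gamma H X^\alpha$ with an interior word $\gamma$; one must check that, after the substitution $H = vv\ad$ in a single coordinate and summation, these reorganize so that the whole expression is governed by the single matrix $C = [c_{\beta\ad\alpha}]_{|\alpha|,|\beta|\geq 1}$ with no surviving dependence on an interior word — equivalently, that the "doubling" of $H$ is precisely what converts the degree-$1$ localizer of the monotone case into a full-degree localizer here. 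This is the combinatorial heart of why convexity sees $[c_{\beta\ad\alpha}]$ while monotonicity sees $[c_{\beta\ad x_i\alpha}]$, and once it is set up correctly the positivity and density arguments are routine transcriptions of the preceding proof. Steps (3) and (4) are essentially quoted verbatim from \cite{pastdfree} and the monotone lemma above, so they present no obstacle.
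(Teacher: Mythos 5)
There is a genuine gap, and you have correctly identified where it lives but not how to fill it. Your proposed substitution $H_i = vv\ad$ in a single coordinate does \emph{not} eliminate the interior word. Writing the Hessian explicitly,
\[
D^2f(X)[H] = \sum_{\alpha,\beta,\gamma,i,j} c_{\beta\ad x_i\gamma x_j\alpha}\, X^{\beta\ad}H_i X^\gamma H_j X^\alpha,
\]
if you set $H_1 = vv\ad$ and the rest zero, the compression $w\ad D^2f(X)[H] w$ becomes
\[
\sum_{\alpha,\beta,\gamma} c_{\beta\ad x_1\gamma x_1\alpha}\,(w\ad X^{\beta\ad} v)\,(v\ad X^\gamma v)\,(v\ad X^\alpha w),
\]
and the scalar factor $(v\ad X^\gamma v)$ persists. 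Because it varies with $X$, this is not of the form $K_X^v(w)\ad C K_X^v(w)$ for any fixed operator matrix $C$, and no amount of regrouping will make the $\gamma$-dependence disappear. The hoped-for ``check'' in your step (1)/(2) fails.

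What the paper does instead is a block $2\times 2$ dilation: it substitutes
\[
X_k \mapsto \bbm X_k & 0 \\ 0 & 0 \ebm, \qquad H_k \mapsto \bbm 0 & X_k v \\ (X_k v)\ad & 0 \ebm,
\]
and reads off the $(1,1)$ entry of $D^2f$ evaluated there. The $0$ in the lower-right block of the new $X_k$ forces $\tilde X^\gamma$ to vanish in its $(2,2)$ corner whenever $\gamma$ is nonempty, so after sandwiching by the off-diagonal $\tilde H$'s the entire contribution with $|\gamma|\geq 1$ is annihilated in the $(1,1)$ position; only the $\gamma = 1$ (empty word) term survives. Simultaneously, the $X_k v$ appearing inside $H$ promotes each surviving monomial by one letter, so the $(1,1)$ entry becomes
\[
\sum_{\alpha,\beta,i,j} c_{\beta\ad x_i x_j \alpha}\, X^{\beta\ad x_i} v\, v\ad X^{x_j\alpha} \;=\; \sum_{|\tilde\alpha|,|\tilde\beta|\geq 1} c_{\tilde\beta\ad\tilde\alpha}\, X^{\tilde\beta\ad} v\, v\ad X^{\tilde\alpha} \;\geq\; 0,
\]
which compressed by $w$ is exactly $K_X^v(w)\ad C K_X^v(w)\geq 0$ with $K_X^v(w) = (I_\KK\otimes (v\ad X^\alpha)_{|\alpha|\geq 1})w$. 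From there your steps (3)--(5) (density of the span by the reproducing-kernel argument, closure under sums via direct sums, continuity) go through exactly as you describe and match the paper. The missing idea, and the one thing that cannot be transcribed verbatim from the monotone lemma, is this block dilation that both kills the interior word and raises the degree in one stroke.
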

\begin{proof}
	Note
\[
		D^2f(X)[H] = \sum_{\alpha,\beta,\gamma, i,j}
		c_{\beta^*x_i\gamma x_j\alpha} X^{\beta^*}H_i X^\gamma  H_j X^\alpha\geq 0. 
\]
	Under the subsitution
	\[X \mapsto \bbm X & 0 \\ 0 & 0 \ebm,  H\mapsto \bbm 0 & Xv \\ (Xv)^* & 0 \ebm, \]
	and taking the $1,1$ entry of the above relation,
	we see that 
		\[\sum_{\alpha,\beta, i,j}
		c_{\beta^*x_ix_j\alpha} X^{\beta^*x_i}v v^* X^{x_j \alpha}\geq 0.\]
	Therefore, considering the function $K^v_X(w)= (I_\KK \otimes (v^*X^\alpha)_\alpha) w$
	we see again that the range is dense, so we are done.
\end{proof}

The following theorem is related to the ``butterfly realization'' for noncommutative rational functions in \cite{heltonbutterfly}.

\begin{theorem}\label{butterfly}
Let $f$ be a matrix convex function whose power series conveges absolutely and uniformly on  $B_\C(0,1+\ep)\subseteq \mathcal{S}(\R^d)$. Let $\h$ be a Hilbert space equipped with the inner product
\[
\ip{\alpha\otimes v}{\beta \otimes w} = w^*c_{\beta\ad\alpha}v
\]
where $\alpha, \beta$ range over all monomials with degree greater than or equal to $1$ and $v, w$ range over $\KK.$
Define the self-adjoint operators $T_i$ by
\[
T_i (\alpha\otimes v) = x_i \alpha \otimes v.
\]
Let $Q_i$ be the map taking $v\in \KK$ to $x_i \otimes v \in \h.$
The operators $T_i$ are contractions and
\[
f(Z) = a_0 + L(Z) + (\sum Q_i Z_i\ad)^* (I - \sum T_i Z_i)\inv (\sum Q_i Z_i)
\]
for some choice of $a_0$ and continuous linear function $L.$
\end{theorem}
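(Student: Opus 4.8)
The strategy mirrors the proof of Theorem~\ref{monoreal}, but using the second-order localizing matrix from the preceding convexity lemma rather than the first-order one. First I would split off the degree $\leq 1$ part of $f$: write $f(Z) = a_0 + L(Z) + g(Z)$, where $a_0 = c_1$, $L(Z) = \sum_i c_{x_i} Z_i$ is the continuous linear piece, and $g(Z) = \sum_{\abs{\alpha}\geq 2} c_\alpha Z^\alpha$ collects the monomials of degree at least $2$. All the content is in realizing $g$.

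Next I would set up the Hilbert space $\h$ on the formal symbols $\alpha \otimes v$ with $\abs{\alpha}\geq 1$, $v \in \KK$, with inner product $\ip{\alpha\otimes v}{\beta\otimes w} = w^* c_{\beta\ad\alpha} v$; positivity (and hence that this is a genuine pre-inner-product, after quotienting the kernel and completing) is exactly the content of the convexity lemma just proved, since $C = [c_{\beta\ad\alpha}] \geq 0$. One checks that $T_i(\alpha\otimes v) = x_i\alpha\otimes v$ is well-defined and self-adjoint by the same symmetric reindexing computation used for $A$ in Theorem~\ref{monoreal}: $\ip{T_i(\alpha\otimes v)}{\beta\otimes w} = w^* c_{\beta\ad x_i \alpha} v = \ip{\alpha\otimes v}{T_i(\beta\otimes w)}$ after relabelling. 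Contractivity of each $T_i$ (in fact of $\sum T_i Z_i$ for $\|Z\|$ small, which is what makes the resolvent converge) follows from the spectral-radius formula $\|T_i\| = \rho(T_i) = \sup_v \sup_\alpha \liminf_n \|T_i^n(\alpha\otimes v)\|^{1/n}$, bounding $\|T_i^n(\alpha\otimes v)\|^2 = \sum_{\abs{\omega}=n} v^* c_{\alpha\ad \omega \alpha} v$ (here $\omega$ a word of length $n$ in the single letter $x_i$, or a sum over all such words when handling $\sum T_i Z_i$) by $\sum_\omega \abs{v^* c_\omega v}$, which is finite because the series converges absolutely and uniformly on $B_\C(0,1+\ep)$, so the coefficients decay geometrically and $\rho \leq 1$.

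Then I would verify the realization formula by formally expanding the resolvent: with $Q_i v = x_i \otimes v$, one computes
\[
\Bigl(\sum_i Q_i Z_i\Bigr)^* \Bigl(I - \sum_i T_i Z_i\Bigr)\inv \Bigl(\sum_j Q_j Z_j\Bigr)
= \sum_{n\geq 0}\ \sum_{i,j}\ \sum_{\abs{\gamma}=n} Z_i \ip{ (\textstyle\sum_k T_k Z_k)^n (x_j\otimes v)}{x_i\otimes w} Z_j,
\]
and unwinding $\ip{x_i \alpha\otimes v}{x_j\otimes w} = w^* c_{x_j\ad x_i \alpha} v$ shows term-by-term that this equals $\sum_{\abs{\mu}\geq 2} w^* c_\mu v\, Z^\mu = w^* g(Z) v$, exactly as in the final computation of Theorem~\ref{monoreal}, except that the ``boundary'' vectors $Q_i$ now inject into the degree-one layer rather than the empty word. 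Sandwiching between $w^*(\cdot)v$ and using density of $\{K_X^v(w)\}$ (the lemma preceding the theorem) to justify working at the level of matrix elements completes the identification $f(Z) = a_0 + L(Z) + (\sum Q_i Z_i\ad)^*(I - \sum T_i Z_i)\inv(\sum Q_i Z_i)$ on a neighbourhood of $0$, and analytic continuation extends it to all of $B_\C(0,1+\ep)$.

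The main obstacle I anticipate is the bookkeeping around convergence of the resolvent: one needs $\sum T_i Z_i$ to be a strict contraction on a genuine neighbourhood of $0$ in $\mathcal{S}(\R^d)$, which requires the spectral-radius estimate to be uniform across the (possibly infinite-dimensional, operator-entry) Hilbert space $\h$, and this is where the hypothesis of absolute uniform convergence on the \emph{slightly larger} ball $B_\C(0,1+\ep)$ is essential — it is the device that upgrades ``$\rho \leq 1$'' to an honest operator-norm bound with room to spare. A secondary subtlety is checking that $\h$ is well-defined: the kernel of the form $[c_{\beta\ad\alpha}]$ must be quotiented out and the operators $T_i$, $Q_i$ shown to descend to the quotient, which again follows formally from self-adjointness of the Gram matrix but should be stated carefully. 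Everything else is the same symmetric-reindexing and geometric-series manipulation already carried out for the monotone case.
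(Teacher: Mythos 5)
Your proposal follows essentially the same route as the paper: decompose $f$ into the affine part plus the higher-order tail, build $\h$ from the positivity of the second-order localizing matrix $[c_{\beta\ad\alpha}]_{\alpha,\beta}$, show each $T_i$ is a contraction by the spectral-radius argument already used for $A$ in Theorem~\ref{monoreal} (with absolute uniform convergence on $B_\C(0,1+\ep)$ supplying the uniform bound on coefficients), and verify the realization by formal resolvent expansion. One small index slip worth fixing: $\norm{T_i^n(\alpha\otimes v)}^2 = \ip{x_i^n\alpha\otimes v}{x_i^n\alpha\otimes v} = v^*c_{\alpha\ad x_i^{2n}\alpha}v$, so the inner word has length $2n$, not $n$, since both slots of the inner product contribute a factor of $x_i^n$ — this does not change the conclusion but should be stated correctly.
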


\begin{proof}
That the realization formula is equivalent to the function when the $T_i$ are contractions is a standard algebraic manipulation.  The nontrivial part of the proof, then, is to show that the $T_i$ are contractive.

We proceed by a spectral radius argument as before. 
\begin{align*}
\norm{T_i^n (\alpha \otimes v)}^2 &= \ip{T_i^n \alpha\otimes v}{T_i^n \alpha \otimes v} \\ 
&= v^*c_{\alpha\ad x_i^{2n} \alpha}v.
\end{align*}
The coefficients must be uniformly bounded, as the power series converges uniformly and absolutely on the ball of radius $1$. This completes the proof.
\end{proof}

We remark that the construction of the realization is essentially canonical, and therefore must have maximal domain, (as opposed to our {\it a priori} assumption of a ball) as the realization at any point can be used to determine the realization at any other point on connected sets. (That is, a matrix convex function with a realization as above defined on a convex domain $G$ must have $I- \sum T_i Z_i$ positive for all $Z \in G.$) Moreover, by a limiting argument, a matrix convex function on a domain containing $0$ over a general operator system should be of the form:
\[
f(Z) = a_0 + L(Z) + \Lambda(Z\ad)^* (I -  \Gamma(Z))\inv \Lambda(Z)
\]
where $\Lambda:R_1 \rightarrow \mathcal{B}(\KK,\h)$ and $\Gamma: R_1 \rightarrow \BH$ are linear maps.
The boundedness of $\Lambda$ follows from the continuity of the second derivative, the continuity of $\Gamma$ follows from the fact that the spectral radius is bounded, essentially the same argument as before. 
That is, we have the following corollary.
\begin{corollary}[A noncommutative Kraus theorem]\label{abutterfly}
Let $R_1, R_2$ be real operator systems.
Let $G\subseteq \mathcal{S}(R_1)$ be a convex domain.
Let $f:G \rightarrow \mathcal{S}(R_2)$ be a locally bounded free function on a convex domain $G\subseteq \mathcal{S}(R_1)$ with $B \in G_1.$
The function $f$ is matrix convex if and only if 
\[
f(Z+B) = a_0 + L(Z) + \Lambda(Z\ad)^* (I -  \Gamma(Z))\inv \Lambda(Z)
\]
where $\h$ is a Hilbert space, $L:R_1\rightarrow \BK$, $\Lambda:R_1 \rightarrow \mathcal{B}(\KK,\h)$ and $\Gamma: R_1 \rightarrow \BH$ are completely bounded linear maps, where $L$ and $\Gamma$ are self-adjoint valued.
\end{corollary}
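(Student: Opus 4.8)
The plan is to prove the ``only if'' direction, since the ``if'' direction is the standard algebraic verification --- given the stated realization, convexity of $f$ reduces to computing $D^2 f(Z+B)[H] = 2\Lambda(H\ad)^*(I-\Gamma(Z))\inv\Lambda(H)$ (modulo terms forced to vanish by the self-adjointness of $\Gamma$ and linearity of $L$), together with the positivity of $(I-\Gamma(Z))\inv$ guaranteed by the hypothesis that the realization is defined, i.e.\ $Z \in G$. So the substance is: given a locally bounded matrix convex $f$, produce the realization. First I would reduce to $B=0$ by translation (replacing $f$ by $Z \mapsto f(Z+B)$, which is convex on the translated domain). Next, by the royal road theorem (Theorem \ref{realsov}), $f$ is real analytic at $0$, so on some ball $B_\C(0,\rho)$ we have a uniformly and absolutely convergent homogeneous expansion; rescaling by scale invariance of $\conv$ we may as well take $\rho = 1+\ep$. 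At this point Theorem \ref{butterfly} applies \emph{verbatim} in the case $R_1 = \R^d$, $R_2 \subseteq \BK$, producing $\h$, the contractive self-adjoint operators $T_i$, the maps $Q_i$, the scalar $a_0$, and the linear $L$, with
\[
f(Z) = a_0 + L(Z) + \Bigl(\sum_i Q_i Z_i\ad\Bigr)^*\Bigl(I - \sum_i T_i Z_i\Bigr)\inv\Bigl(\sum_i Q_i Z_i\Bigr).
\]
Setting $\Lambda(Z) = \sum_i Q_i Z_i$ and $\Gamma(Z) = \sum_i T_i Z_i$ (both manifestly linear in $Z$, with $\Gamma$ self-adjoint-valued since the $T_i$ are self-adjoint) gives exactly the claimed form on a neighborhood of $0$ in the finite-dimensional case.

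The remaining two tasks are: (i) to upgrade from a neighborhood of $0$ to the full convex domain $G$, and (ii) to pass from $R_1 = \R^d$ to a general operator system $R_1$, with completely bounded $L, \Lambda, \Gamma$. For (i), I would use the canonicity remark preceding the corollary: the operators $T_i$ (equivalently $\Gamma$) and the data $(a_0, L, \Lambda)$ built from the Taylor coefficients at $0$ are intrinsic, and since $G$ is convex (hence connected) the realization analytically continues along segments; where the right-hand side makes sense it must equal $f$, and conversely convexity of $f$ forces $I - \Gamma(Z)$ to stay positive on all of $G_n$ (otherwise one manufactures a failure of the second-derivative inequality near a point where positivity degenerates), so the formula is valid throughout $G$. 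For (ii), the finite-dimensional statement together with the free-function compatibility under coordinatization and the control axioms gives uniform bounds on $\|\Gamma\|$ (spectral radius $\le 1$, exactly the $T_i^n$ computation of Theorem \ref{butterfly} run on an arbitrary positive direction) and on $\|\Lambda\|$ (from continuity of $D^2 f$, which is built into local boundedness plus analyticity); a limiting/direct-limit argument over finite-dimensional subsystems of $R_1$, using the Hahn--Banach remark to reduce to $R_2 = \R$ when checking the bounds, assembles the global completely bounded maps $L, \Lambda, \Gamma$ on $R_1$.

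The main obstacle I anticipate is step (ii), the passage to infinite-dimensional $R_1$: one must check that the Hilbert space $\h$ and the operators assembled on finite-dimensional slices are genuinely compatible across the directed system --- i.e.\ that restricting the realization from a larger finite-dimensional subsystem to a smaller one recovers the smaller realization up to the canonical identification --- and that the resulting maps are \emph{completely} bounded, not merely bounded. The complete boundedness of $\Gamma$ should follow from the contractivity of each $T_i$ upgraded to contractivity of $\sum T_i \otimes Z_i$ via the positive-orthant spectral radius estimate in the proof of Theorem \ref{butterfly}; the complete boundedness of $\Lambda$ from the positive-orthant norm bound $\|D^2 f(0)\| \le \ep^{-2}\|f\|_{B_\R(0,\ep)}$ established in the proof that convex functions form a sovereign class, applied at every level simultaneously. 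The verification that $L$ is completely bounded and self-adjoint-valued is comparatively routine, coming from $L(Z) = Df(0)[Z]$ minus the (second-order) contribution of the Cauchy-kernel term, together with the estimate $\|Df(0)\| \le 5\ep^{-1}\|f\|_{B_\R(0,\ep)}$ already in hand.
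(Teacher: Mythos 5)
Your overall strategy matches the paper's: reduce to $B=0$, invoke the royal road theorem for real analyticity, apply Theorem~\ref{butterfly} on finite-dimensional slices, and assemble the global $\Lambda$, $\Gamma$ by a direct limit over the finite-dimensional operator subsystems of $R_1$. The crucial gap is in your treatment of the complete boundedness of $\Gamma$. You write that this ``should follow from the contractivity of each $T_i$ upgraded to contractivity of $\sum T_i \otimes Z_i$ via the positive-orthant spectral radius estimate in the proof of Theorem~\ref{butterfly}.'' But the spectral radius computation in Theorem~\ref{butterfly} only yields $\|T_i\|\le 1$ for each $i$ \emph{separately}; this is far from giving $\|\sum_i T_i\otimes Z_i\|\le 1$ for $\|Z\|\le 1$, which is a statement about the \emph{joint} spectral radius of the family $\{\Gamma(Z):\|Z\|\le1\}$. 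Individual contractivity of a tuple of noncommuting self-adjoint operators does not control the norm of their linear pencil (for instance, $\|T_1\otimes Z_1 + T_2\otimes Z_2\|$ can be as large as $2$ for $\|Z\|\le1$), so the ``upgrade'' is not free.

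The paper closes this gap with a genuine extra argument: it differentiates the realization formula twice, substitutes a $2\times 2$ block structure $Z\mapsto\bbm 0&0\\0&Z\ebm$, $H\mapsto\bbm 0&H\\H&0\ebm$, and reads off the $(1,1)$ entry to obtain the kernel $\Lambda_R(H)^*(I-\Gamma_R(Z))^{-1}\Lambda_R(H)$, whose geometric series converges uniformly and absolutely. It then feeds a strictly block upper triangular $Z$ with contractions $Z_1,\ldots,Z_n$ on the superdiagonal into this series to extract that arbitrary products $\Gamma_R(Z_1)\cdots\Gamma_R(Z_n)\Lambda_R(H)$ are eventually contractive, and from this deduces the joint spectral radius bound on $\{\Gamma_R(Z):Z\in B_\R(0,1)_m\}$ at each level $m$. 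This second-derivative/block-upper-triangular argument is the technical core and is missing from your proposal. Without it, the complete boundedness of $\Gamma$ --- and hence the existence of the infinite-dimensional realization --- is not established; you would need to either reproduce this argument or supply an equivalent device.

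Two smaller points. First, you propose to rescale so that the expansion converges on $B_\C(0,1+\ep)$ via scale invariance of $\conv$; this is fine and implicit in the paper. Second, your task (i) --- extending from a neighborhood of $0$ to all of $G$ --- is handled in the paper only via a remark (before the corollary) about canonicity and positivity of $I-\Gamma(Z)$ on $G$, so your sketch there is consistent with the paper's intent; but be aware the paper does not spell this out inside the proof either, so your instinct to flag it as a separate task is reasonable.
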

\begin{proof}
Without loss of generality $B=0,$ $f(0)=0$ and $Df(0) = 0.$ Moreover,
we assume $f$ has a uniformly convergent homogeneous power series on the unit ball, which exists by real analyticity.
 
Let $\mathcal R$ denote the collection of finite operator system subspaces of $R_1.$

Fix $R \in \mathcal{R}$. Pick a basis $r_1, \ldots, r_n.$
Consider the induced function $g(X)=f(\sum r_i X_i).$
We see that
$$g(Z) = (\sum Q_i Z_i\ad)^* (I - \sum T_i Z_i)\inv (\sum Q_i Z_i).$$
Call the representing Hilbert space $\h_R.$
Now,
$f|_R(Z) = \Lambda_R(Z\ad) (I - \Gamma_R(Z))\inv \Lambda_R(Z).$
Taking the second derivative, we get
%$$\Lambda_R(H)^* (I - \Gamma_R(Z))\inv \Lambda_R(H)
%+\Lambda_R(Z)^* (I - \Gamma_R(Z))\inv\Gamma_R(H)(I - \Gamma_R(Z))\inv \Lambda_R(H)$$
%$$+\Lambda_R(H)^* (I - \Gamma_R(Z))\inv\Gamma_R(H)(I - \Gamma_R(Z))\inv \Lambda_R(Z)$$
%$$+\Lambda_R(Z)^* (I - \Gamma_R(Z))\inv\Gamma_R(H)(I - \Gamma_R(Z))\inv\Gamma_R(H)(I - \Gamma_R(Z))\inv \Lambda_R(Z).$$
\begin{align*}
&\Lambda_R(H\ad)^* (I - \Gamma_R(Z))\inv \Lambda_R(H) + \\
&\Lambda_R(Z\ad)^* (I - \Gamma_R(Z))\inv\Gamma_R(H)(I - \Gamma_R(Z))\inv \Lambda_R(H) + \\
&\Lambda_R(H\ad)^* (I - \Gamma_R(Z))\inv\Gamma_R(H)(I - \Gamma_R(Z))\inv \Lambda_R(Z) +\\
&\Lambda_R(Z\ad)^* (I - \Gamma_R(Z))\inv\Gamma_R(H)(I - \Gamma_R(Z))\inv\Gamma_R(H)(I - \Gamma_R(Z))\inv \Lambda_R(Z).&
\end{align*}

Under the substitution
	\[Z \mapsto \bbm 0 & 0 \\ 0 & Z \ebm,  H\mapsto \bbm 0 & H \\ H & 0 \ebm, \]
taking the $1,1$ entry we get 
$$\Lambda_R(H) (I - \Gamma_R(Z))\inv \Lambda_R(H).$$
The geometric expansion of this formula converges uniformly and absolutely.
Therefore for contractions, $\Gamma_R(Z)^{n}\Lambda_R(H)$ is eventually contractive. Now, taking $Z$ to be a strictly block upper triangular matrix with $Z_1, \ldots, Z_n \in B_\R(0,1)$ on the upper diagonal, we see that $\Gamma_R(Z_1)\Gamma_R(Z_2)\ldots\Gamma_R(Z_n)\Lambda_R(H)$ must be contractive for $n$ large enough, and therefore the joint spectral radius of the set $\{\Gamma_R(Z)|Z\in B_{\R}(0,1)_m\}$ is less than or equal to $1$ for each $m.$

By canonicity of the construction, if $R \subseteq S$, $\h_R$ embeds into $\h_S$ (for example we could have extended the basis we chose for $R$ in our original construction to a basis for $S$.) Moreover
$\Lambda_S|_R = \Lambda_R$ under this identification and $\Gamma_S|_R  = \Gamma_R \oplus J_{SR}$
for some linear map $J_{SR}.$
So, ordering the sets in $\mathcal{R}$ under inclusion, we can take a direct limit to obtain $\Gamma, \Lambda$ as desired. 
\end{proof}

\section{L\"owner and Kraus type continuation theorems}

\begin{theorem}\label{newlow}
Let $R_1, R_2$ be real operator systems.
Let $G\subseteq \mathcal{S}(R_1)$ be a convex domain. A free function  $f:G \rightarrow \mathcal{S}(R_2)$ is matrix monotone if and only if it analytically continues to the upper half plane.
\end{theorem}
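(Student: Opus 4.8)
The plan is to leverage the machinery built in the previous sections: the royal road theorem (Theorem \ref{realsov}) reduces the analytic continuation problem to a quantitative local statement, and the realization theorem (Theorem \ref{monoreal}) provides the explicit formula whose analytic continuation is transparent. One direction is easy: if $f$ analytically continues to $\Pi(R_1)$ as a map into $\cc{\Pi(R_2)}$, then monotonicity follows from the classical one-variable argument applied along each segment $t \mapsto f(\frac{1-t}{2}A + \frac{1+t}{2}B)$, since this is a one-variable Pick function in the relevant sense and Pick functions are increasing on the real axis; more precisely, for $A \le B$ the chord can be pushed into the tube $T(G)$ and the imaginary-part condition forces monotonicity on the boundary. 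So the real content is the forward implication: a matrix monotone $f$ analytically continues.

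For the forward direction, first invoke Proposition stating that matrix monotone functions on domains in $\conv$ form a sovereign class, so by Theorem \ref{realsov} $f$ is real analytic on $G$. Next, fix a base point (WLOG $0 \in G_1$ with $B_\R(0,1) \subset G$, after translation and coordinatization) and expand $f$ in a uniformly convergent homogeneous noncommutative power series on a complex ball $B_\C(0,\delta)$. Apply the localizing-matrix lemma (the one preceding Theorem \ref{monoreal}) to conclude that the $x_i$-localizing matrices $C_i = [c_{\beta^* x_i \alpha}]_{\alpha,\beta}$ are positive semidefinite, and then Theorem \ref{monoreal} gives the realization
\[
f(Z) = a_0 + Q^*\Bigl(A - \sum_i P_i Z_i^{-1}\Bigr)^{-1} Q
\]
on a neighborhood of $0$, with $A$ a bounded self-adjoint contraction and $P_i$ orthogonal projections. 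The point is that this formula continues: for $Z \in \Pi(R_1)$, each $Z_i^{-1}$ has negative imaginary part, so $\sum_i P_i Z_i^{-1}$ has negative imaginary part on the range of $\sum P_i$, hence $A - \sum_i P_i Z_i^{-1}$ is invertible and the resolvent maps into $\cc{\Pi(R_2)}$; the formula thus defines an analytic extension of $f$ to the connected component of $\Pi(R_1)$ containing points near $0$. Because the realization is essentially canonical (the resolvent identity lets one propagate the formula from one point to another along connected sets, as remarked after Theorem \ref{monoreal} in the convex case), the extension is defined and single-valued on all of $\Pi(R_1)$ that is reachable, and agrees with $f$ on $G$.

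The main obstacle I anticipate is the globalization/canonicity step: Theorem \ref{monoreal} is stated only for functions analytic on a ball $B_\C(0,1+\epsilon)$, so one must argue that the locally-defined realizations at different base points in $G$ are compatible and patch together to give a single analytic function on the tube $T(G)$ and then on $\Pi(R_1)$. This requires showing the representing Hilbert spaces and operators $(A, P_i)$ built at different points are unitarily identifiable on overlaps — a direct-limit argument analogous to the one in the proof of Corollary \ref{abutterfly}, using that the Taylor coefficients at one point determine those at a nearby point by real analyticity. A secondary technical point is verifying that $\im f \le 0$ (resp.\ $\ge 0$) on all of $\Pi(R_1)$ once the extension exists, which again follows from the structure of the resolvent formula together with the operator-valued Herglotz/Nevanlinna positivity encoded in $C_i \ge 0$. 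I would organize the write-up so that the ball case is handled first in full, then the general convex domain is reduced to it by the patching argument.
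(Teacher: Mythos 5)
Your overall strategy matches the underlying machinery of the paper (sovereign class $\Rightarrow$ real analyticity $\Rightarrow$ localizing matrices $\Rightarrow$ realization), but you have not resolved the globalization issue that you correctly flag as the main obstacle, and the paper handles it quite differently from your sketched direct-limit argument.

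The paper's proof of Theorem~\ref{newlow} sidesteps the patching problem entirely. After reducing by coordinatization to level one and assuming $R_1$ is finite dimensional, it composes with positive linear functionals $\lambda$ on $R_2$ (using the Hahn--Banach-type fact that $f$ continues to a Pick function iff every $\lambda\circ f$ does) to reduce to $R_2 = \R$. Then, for each individual target point $Z \in \Pi(R_1)_1$, it chooses positive elements $H_1,\ldots,H_n$ spanning $R_1$ so that $Z = \sum H_i z_i$ with $(z_1,\ldots,z_n) \in \Pi(\R^n)_1$, observes that $x \mapsto f(\sum H_i x_i)$ is a matrix monotone free function on a convex domain in $\mathcal{S}(\R^n)$, and applies the realization of Theorem~\ref{monoreal} to that finite-variable restriction to get an analytic continuation covering $Z$ directly. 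Because each such restriction is just a slice of the fixed function $f$, continuations at different points agree automatically by uniqueness of analytic continuation on the real domain; no identification of representing Hilbert spaces across base points is needed, and one exhausts $\Pi(R_1)_1$ by enlarging the set of $H_i$.

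By contrast, your proposal tries to extend the single realization built at one base point across all of $\Pi(R_1)$ by a unitary-identification/direct-limit argument modeled on Corollary~\ref{abutterfly}. That is the genuinely hard part, and it is precisely what the paper's proof is designed to avoid; the monotone realization (unlike the convex one) involves $Z_i^{-1}$ and projections $P_i$, so patching representing triples $(\h, A, P_i)$ at different base points is a nontrivial operator-theoretic problem that your write-up leaves unresolved. If you want a complete proof, replace the patching step with the paper's reduction: reduce $R_2$ to $\R$ via positive functionals, and for each $Z$ pull back along a positive spanning family $\{H_i\}$ to a finite-variable matrix monotone function and continue that. (Your ``easy direction'' via one-variable Pick behavior along chords is fine, modulo the usual $\epsilon$-perturbation from $A \le B$ to $A < B$.)
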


\begin{proof}
We essentially follow \cite{pascoeopsys}, except we need not appeal to the perhaps technically daunting Agler, McCarthy, and Young theorem \cite{amyloew}. Note that it is enough to show that $f$ analytically continues at each level to a Pick function -- that is an analytic function from $\Pi(R_1)_1$ to $\cc{\Pi(R_2)}_1$ - and therefore, by coordinatization, it is enough to show that this occurs at level $1$. Moreover, it suffices to consider the case of finite dimensional $R_1$. Moreover, we can assume $0$ is in $G$. 

The function $f$ will analytically continue to a Pick function if and only if $\lambda \circ f$ analytically continues to a Pick function for all positive linear functionals $\la$ on $R_2$. Therefore, it is enough to consider the case where $R_2$ is one dimensional.

Pick $Z \in \Pi(R_1)_1$. Pick $H_1, \ldots, H_n > 0$ such that there is a point $(z_1, \ldots, z_n) \in \Pi(\R^n)_1$ with $Z = \sum H_i z_i$ and the $H_i$ span $R_1$. Now, $f(\sum H_i x_i)$ is a matrix monotone function of $x$ and therefore analytically continues to the upper half plane $\Pi(\R^n)_1$ by the realization formula in Theorem \ref{monoreal}, which pulls back to $\Pi(R_1)_1$. (Note, as we choose additional $H_i$, we exhaust more and more of $\Pi(R_1)_1$.) 

\end{proof}

\begin{theorem}
Let $R_1, R_2$ be real operator systems.
Let $G\subseteq \mathcal{S}(R_1)$ be a convex domain. If a free function $f:G \rightarrow \mathcal{S}(R_2)$ is matrix convex and locally bounded then $f$ analytically continues to the tube
\[
T(G) = \{X + iY| X \in G \text{ and } Y = Y\ad \}.
\]
\end{theorem}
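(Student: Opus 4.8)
The plan is to mirror the proof of Theorem \ref{newlow}, replacing the L\"owner realization with the Kraus-type butterfly realization of Corollary \ref{abutterfly} (equivalently Theorem \ref{butterfly}). First I would reduce the general operator system statement to the scalar, level-one, finite-dimensional case exactly as in Theorem \ref{newlow}: coordinatization reduces us to showing analytic continuation at level $1$; composing with positive linear functionals $\lambda$ on $R_2$ reduces us to $R_2 = \R$ (noting that $f$ continues to the tube iff each $\lambda \circ f$ does, and $\cc{\Pi}$ is replaced here by the requirement that the continuation land in the appropriate half-plane-valued target dictated by the Kraus representation); and restricting to finite-dimensional subspaces of $R_1$ spanned by directions $H_i$ reduces us to the several-real-variable case $f(\sum H_i x_i)$ on a convex domain in $\mathcal{S}(\R^n)$ containing (after translation) $0$.

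Next, by the royal road theorem (Theorem \ref{realsov}) together with the fact that matrix convex functions form a sovereign class, the locally bounded matrix convex function $f$ is real analytic, so near $0$ it has a uniformly convergent homogeneous power series and hence, by Theorem \ref{butterfly} / Corollary \ref{abutterfly}, a butterfly realization
\[
f(Z) = a_0 + L(Z) + \Lambda(Z\ad)^* (I - \Gamma(Z))\inv \Lambda(Z)
\]
with $L, \Gamma$ self-adjoint-valued completely bounded linear maps and the joint spectral radius of $\{\Gamma(Z) \mid Z \in B_\R(0,1)_m\}$ at most $1$ for every $m$. The key point is that this realization formula makes sense for complex $Z = X + iY$: the only obstruction to evaluating it is the invertibility of $I - \Gamma(Z)$. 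Since $\Gamma$ is a real linear map extended complex-linearly, $\Gamma(X+iY) = \Gamma(X) + i\Gamma(Y)$ with $\Gamma(X), \Gamma(Y)$ self-adjoint. For $X$ ranging over the real convex domain $G$ (where by canonicity/maximality of the realization $I - \Gamma(X) > 0$, as remarked after Theorem \ref{butterfly}) and $Y$ arbitrary self-adjoint, one shows $I - \Gamma(X) - i\Gamma(Y)$ is invertible: write it as $(I - \Gamma(X))^{1/2}\bigl(I - i(I-\Gamma(X))^{-1/2}\Gamma(Y)(I-\Gamma(X))^{-1/2}\bigr)(I-\Gamma(X))^{1/2}$, and the middle factor is $I - i(\text{self-adjoint})$, hence invertible. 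Thus the realization defines an analytic function on the tube $T(G)$ agreeing with $f$ on $G$, which is the desired continuation.

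The main obstacle is establishing that the butterfly realization is valid on all of $G$, not merely on the ball $B_\R(0,1)$ on which Theorem \ref{butterfly} is literally stated. This is exactly the canonicity/maximal-domain remark following Theorem \ref{butterfly}: the realization is built intrinsically from the Taylor coefficients at a point, and the realization data at one point determines it at any other on a connected set, forcing $I - \Gamma(Z)$ to remain positive throughout the convex domain $G$. I would spell out this analytic-continuation-of-the-realization argument carefully, since it is what upgrades a local statement to one on all of $G$ and hence yields continuation to the full tube $T(G)$. A secondary, more routine point is checking that the several-variable continuations obtained for each finite-dimensional slice $\mathrm{span}\{H_i\}$ are consistent and patch together to a single analytic function on $T(G)_1$ — this follows from uniqueness of analytic continuation and the fact that enlarging the collection of directions $H_i$ exhausts $T(G)_1$, just as in the proof of Theorem \ref{newlow}. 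Finally, the converse direction (a function with such a realization on the tube is matrix convex) is immediate from the positivity of the second derivative computation already carried out in the proof of Corollary \ref{abutterfly}.
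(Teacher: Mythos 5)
Your core analytic idea is correct and matches the paper's: for $X$ with $I - \Gamma(X) > 0$ and $Y$ self-adjoint, the factorization
\[
I - \Gamma(X) - i\Gamma(Y) = (I - \Gamma(X))^{1/2}\bigl(I - i(I-\Gamma(X))^{-1/2}\Gamma(Y)(I-\Gamma(X))^{-1/2}\bigr)(I - \Gamma(X))^{1/2}
\]
shows the resolvent in the butterfly realization survives passage to a complex argument, and that is exactly the mechanism the paper uses. However, the route you take to deploy it is more circuitous than the paper's and rests on a step that is not carried out.

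The issue is that you hang the entire argument on the claim that a single realization, built from the Taylor series of $f$ at one point, remains valid with $I - \Gamma(X) > 0$ across all of $G$. That is exactly the canonicity/maximal-domain remark, which the paper states only informally after Theorem \ref{butterfly} and does not prove; you yourself call spelling it out "the main obstacle." The paper's proof avoids needing any global realization by arguing locally: fix $Z = X + iY \in T(G)_1$, translate so $X = 0$, rescale so $f$ is bounded and analytic on $B_\C(0, 1+\ep)$ (available by the royal road theorem), and apply Corollary \ref{abutterfly} to get a realization centered at $0$ with $\|\Gamma(W_1)\| < 1$ for real $W_1 \in B_\R(0,1)$. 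Writing $W = W_1 + iW_2$ for $W \in B_\C(0,1)$, one then has $\Gamma(iY + W) = \Gamma(W_1) + i\Gamma(Y + W_2)$, and your factorization with $I - \Gamma(W_1)$ strictly positive gives the resolvent bound and hence $\norm{f(Z+W)} \leq \norm{a_0}+\norm{L}\|Z+W\|+\frac{1}{\ep}\|\Lambda\|^2\norm{Z+W}^2$. That is a local continuation near each point of the tube, and these glue by uniqueness of analytic continuation. No canonicity argument is needed.

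Two smaller remarks. First, the scalar and finite-dimensional reductions you import from Theorem \ref{newlow} are superfluous here: Corollary \ref{abutterfly} is already stated for arbitrary real operator systems $R_1, R_2$, and in any case the scalar reduction is shaky for convex functions, since there is no canonical half-plane target analogous to the Pick class to characterize the continuations of $\lambda\circ f$ and guarantee they assemble. Second, the theorem as stated is one-directional, so your paragraph about a converse is moot.
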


\begin{proof}
Let $Z \in T(G)$. Without loss of generality, $Z \in T(G)_1$. We will show that $f$ is bounded on a noncommutative ball around $Z$. 

First, write $Z =X+ iY$. Without loss of generality, $X = 0$ and $f$ is bounded and analytic on $B_\C(0, 1+ \ep)$. Pick $W \in B_\C(0,1).$ By the realization formula in Theorem \ref{abutterfly}, 
\[
f(Z) = a_0 + L(Z) + \Lambda(Z\ad)^* (I -  \Gamma(Z))\inv \Lambda(Z).
\]
Therefore,
\begin{align*}
\norm{f(Z + W)} \leq \norm{a_0}+\norm{L}\|Z+W\|+\frac{1}{\ep}\|\Lambda\|^2\norm{Z+W}^2.
\end{align*}
This shows that $f$ analytically continues to a neighborhood of $Z$, which establishes the claim.
\end{proof}

\bibliography{references}
\bibliographystyle{plain}

\printindex

\end{document}